\documentclass[12pt,twoside]{amsart}
\usepackage{amsmath, amssymb,latexsym}
\usepackage[all,cmtip]{xy}
\usepackage{url}
\usepackage{amsmath,amsthm}
\usepackage{enumerate}
\usepackage{graphicx}

\theoremstyle{theorem}
\newtheorem{theorem}{Theorem}
\newtheorem{lemma}[theorem]{Lemma}

\newtheorem{proposition}[theorem]{Proposition}

\theoremstyle{definition}
\newtheorem{definition}[theorem]{Definition}

\theoremstyle{remark}
\newtheorem{remark}[theorem]{Remark}

\newcommand{\dd}
{\displaystyle}

\begin{document}
\title[First Order Algebras]{The Universal Theory of First Order Algebras and Various Reducts}
\author{Lawrence Valby}
\address{valby@berkeley.edu}

\maketitle

\begin{abstract}

First order formulas in a relational signature can be considered as operations on the relations of an underlying set, giving rise to multisorted algebras we call first order algebras. We present universal axioms so that an algebra satisfies the axioms iff it embeds into a first order algebra. Importantly, our argument is modular and also works for, e.g., the positive existential algebras (where we restrict attention to the positive existential formulas) and the quantifier-free algebras. We also explain the relationship to theories, and indicate how to add in function symbols.
   
\end{abstract}

\section{Introduction}

Briefly speaking, we present in this paper an axiomatization of the universal theory of certain classes of multisorted algebras arising from intersection, union, and other first order operations on relations.\footnote{By ``algebra" I mean a structure in a signature with only function symbols (including constants), i.e.\ a functional signature.}  A reasonable axiomatization of the Horn clause theory of these classes was already established by B\"orner \cite{Boerner}. Theorems \ref{main theorem}, \ref{negation theorem}, \ref{main theorem positive existential}, and \ref{main theorem first order} establish an axiomatization of the universal theory for various situations depending on which first order operations are included in the signature. In Section~\ref{positive quantifier-free} we introduce an axiom (axiom (0)) which spells the difference between the Horn clause theory and the universal theory in each situation. 

Speaking more generally, we investigate in this paper logical connectives like ``and" ($\wedge$), ``or" ($\vee$), ``not" ($\neg$), and ``there exists" ($\exists$) in an algebraic way. We regard logical connectives as operations on relations. For example ``and" corresponds to intersection of relations. Our goal is to understand better how these operations relate to each other. An example of one well-known fact about the connectives is that $\neg(R\wedge S)=\neg R\vee\neg S$. But many other facts are true too, and so we are in fact interested in finding some axioms from which all such facts follow. This was done in the case of the propositional connectives (and, or, not) by Stone in 1936 \cite{Stone}. Our topic of first order connectives has been studied too. Our formalism of choice for discussing this topic is a multisorted one. We will be dealing with first order algebras (and reducts), to be defined in Section~\ref{preliminaries}, which are multisorted algebras of relations with the possible arities of the relations being the sorts. Schwartz \cite{Schwartz} and B\"orner \cite{Boerner} have studied these structures, but they focused on the Horn clause theory of them: we shall be axiomatizing the universal theory.\footnote{I note that the Horn clause theory and the equational theory of first order algebras are equivalent, and similarly for the various reducts considered in this paper. See, e.g., Proposition~\ref{Horn clause}.} 
 
Cylindric algebras provide another formalism for investigating first order operations on relations. The cylindric set algebras, say of dimension $\omega$, (notated $\operatorname{Cs}_\omega$ in Henkin, Monk, and Tarski's book \textit{Cylindric Algebras} \cite{HMT}) in one sense correspond to the subalgebras of first order algebras. The class $SP(\operatorname{Cs}_\omega)=\operatorname{Gs}_\omega$ has an equational axiomatization. However, the class $\operatorname{Cs}_\omega$ itself is not first order axiomatizable (see Remark~3.1.100 in \cite{HMT}). On the other hand, in the multisorted situation, the subalgebras of the first order algebras are universally axiomatizable. The locally finite regular cylindric set algebras, in symbols $\operatorname{Cs}^{\operatorname{reg}}_\omega\cap \operatorname{Lf}_\omega$, likewise form a class which is not first order axiomatizable, yet perhaps they more closely correspond to our multisorted algebras. There is a characterization of their universal theory, due to Andr\'{e}ka and N\'{e}meti (see Theorem~4.1.48, p.\ 127 and 129 in \cite{HMT}). However, it is not immediately clear how to translate between the cylindric algebra and multisorted formalisms, and the axiomatizations and proofs seem different. Also, these results have all the first order operations present, while the argument here explicitly addresses various reducts as well.

Another approach, developed by Craig, is to examine sets of finite sequences of various lengths \cite{craig}. In one respect it is like our multisorted algebras because it allows for relations of various arities, but in another respect it is different because it too is single-sorted. Craig's work in \cite{craig} focuses on the equational theory and not the universal theory. One particular class of algebras, the finite sequence set algebras, is also described on p.\ 265 of \cite{HMT}, and its equational theory indeed has a reasonable equational axiomatization, but on the other hand it is not first order axiomatizable (for a similar reason to that given by Remark~3.1.100 in \cite{HMT}).

Let me stress that in this paper we are more interested in the universal theory than the Horn clause theory of first order algebras and various reducts. Indeed, a reasonable axiomatization of the Horn clause theory of first order algebras is an algebraic version of the completeness theorem for first order logic and has already been noted in the multisorted formalism by Schwartz \cite{Schwartz} and B\"orner \cite{Boerner} and previously in other formalisms (e.g., a cylindric algebra approach in \cite{HMT} or \cite{AndrekaNemeti}). B\"orner's Theorem 3.4.28 in \cite{Boerner} is essentially our Proposition~\ref{Horn for positive existential}. Similarly, B\"orner's Theorem 3.2.7 is our Propositions~\ref{Horn clause} and \ref{Horn negation}. We include these propositions about the Horn clause theory here for comparison and because they fall out very naturally from our approach to the universal theory. The new result of this paper is Theorem~\ref{main theorem}, as well as its straightforward generalization to other signatures: Theorems~\ref{negation theorem}, \ref{main theorem positive existential}, and \ref{main theorem first order}. This universal theory result is modular and works for various signatures. We do not specifically deal with each possible signature in detail, but rather deal with a few signatures in detail, and then briefly indicate how to modify the argument for other situations: Section~\ref{equality} deals briefly with equality and Section~\ref{function symbols} with function symbols.  

Models of the Horn clause theory of first order algebras are essentially first order theories (as discussed in Section~\ref{theories}). Thinking about first order theories in an algebraic way is not new, and it was discussed even in our multisorted formalism by B\"orner in Section~3.7 of \cite{Boerner}. There are a number of discussions of theories in the algebraic logic literature: see for example Section~4.3 of \cite{HMT}. Models of the universal theory of first order algebras are first order theories that are further complete, in the sense that every sentence is true or false and not both. However, the same is not true for the various reducts, e.g.\ positive existential algebras, and in this paper we provide a similarly intuitive condition that spells out the difference between the universal and Horn clause theories for various reducts uniformly (see axiom (0) discussed in Section~\ref{positive quantifier-free} below). We include Section~\ref{theories} on theories not because essentially new material is being introduced, but rather to help explain the value of axiom (0).

This paper is not a category-theoretical approach to thinking about Stone duality in the context of first order logic (cf.\ \cite{AwodeyForssell}), but rather a multisorted algebraic approach to axiomatizing certain concrete classes of algebras of operations on finitary relations. The kind of algebra appearing in this paper is called a ``many-sorted cylindric algebra'' in the survey \cite{Nemeti}. In B\"orner's work they are called ``Krasner algebras''. Bernays considered a similar kind of multisorted-like system, which is presented on pp.\ 263-264 of \cite{HMT}, but that focused on first order logic and not reducts.

Let us now motivate our subject by first considering the case of propositional logic. Consider the propositional formula $\varphi$ with two proposition letters $P$ and $Q$ given by $P\wedge\neg Q$. We can think of such a formula $\varphi$ as giving rise to an operation on subsets of a set. If $p$ and $q$ are subsets of some set $W$, then $\varphi(p,q):=p\cap(W-q)$ is also a subset of $W$. This function $\varphi\colon {\mathcal P}(W)\times {\mathcal P}(W)\to {\mathcal P}(W)$ accepts as input two subsets of $W$ and outputs a subset. Indeed, every propositional formula gives rise to a finitary operation ${\mathcal P}(W)^n\to {\mathcal P}(W)$. In this way we arrive at a functional signature $\tau$ where there is a function symbol of arity $n$ for every propositional formula involving $n$ proposition letters, and we have for every set $W$ a $\tau$-algebra whose underlying set is the powerset ${\mathcal P}(W)$ and so we may call it a \textit{powerset algebra}. Of course, we do not work in practice with the full signature $\tau$, but rather we isolate just some of the operations which compositionally generate all the others in the class of algebras of interest. One convenient choice is $0,1,\vee,\wedge,\neg$ where 0 and 1 are the constants interpreted by $\emptyset$ and $W$ in the powerset algebra determined by $W$. 

Stone's representation theorem gives equational axioms so that a $\tau$-algebra satisfies the axioms if and only if it embeds into a powerset algebra \cite{Stone}. This result can be understood in two different directions as it were. On the one hand, we can imagine being presented with some axioms (the Boolean algebra axioms in this case) and wishing to find some geometrical representation of any algebra that satisfies these axioms. On the other hand, we could begin with a geometric or otherwise natural class of algebras, and then try to axiomatize this class. It is this latter point of view which motivates the present paper.

If we use $K$ to denote the class of powerset algebras, the class $S(K)$ of subalgebras of powerset algebras consists of the $\tau$-algebras whose elements are \textit{some} of the subsets of some set with the operations interpreted normally. Stone's result yields an equational axiomatization of $S(K)$. A $\tau$-algebra $M$ is in $S(K)$ if and only if $M$ satisfies the Boolean algebra axioms.

Analogously, Cayley's theorem that any group embeds into the group of all permutations on some set can be read in two ways. On the one hand, we are showing that any algebra satisfying certain axioms (the group axioms) is representable in a certain way. On the other hand, we are finding an axiomatization of a certain natural class of algebras (the permutation groups). 

If instead of having an operation for every propositional formula we only have one for every positive propositional formula (in other words we look at the operations generated by $0,1,\vee,\wedge$), then a version of Stone's argument works to axiomatize the subalgebras of the powerset algebras in this reduced signature \cite{Stone2}. Once again, this result could be expressed as saying that every distributive lattice has a certain geometric representation, but for the purposes of this paper I prefer to think of it as a way of arriving at the distributive lattice axioms. 

Now instead of having an operation for every propositional formula, let us have an operation for every first order formula in a finite relational signature. For example, if $\theta$ is the formula $\exists y[R_1(x,y)\wedge R_2(x,y)]$, then as an operation on relations $\theta$ accepts as input two binary relations $r_1,r_2\subseteq W^2$ and outputs the unary relation $\{x\in W\mid \exists y[r_1(x,y)\wedge r_2(x,y)]\}$ which is a projection of their intersection. For the next example it is important to note that we consider a formula to come specifically equipped with a \textit{variable context} that contains the free variables of the formula but could contain variables otherwise not explicitly occuring in the formula. Let $\theta$ now be the formula $R(x,y)$ in the variable context $(x,y,z)$. Then as an operation $\theta$ accepts as input a binary relation $r\subseteq W^2$ and outputs the 3-ary relation $\{(x,y,z)\in W^3\mid (x,y)\in r\}$. We will be calling such an operation a \textit{cylindrification}. More generally, let $\sigma$ be the finite relational signature consisting of the relation symbols $R_1,\ldots,R_n$ of arities $m_1,\ldots,m_n$ respectively. Let $\theta(\bar{x})$ be a first order $\sigma$-formula in the variable context $x_1,\ldots,x_k$. Let $W$ be any set. Then $\theta$ induces a function on relations $\theta\colon {\mathcal P}(W^{m_1})\times\cdots\times {\mathcal P}(W^{m_n})\to {\mathcal P}(W^k)$ defined by
\[\theta(r_1,\ldots,r_n):=\{\bar{x}\in W^k\mid (W,r_1,\ldots,r_n)\models\theta(\bar{x})\}\]
where $(W,r_1,\ldots,r_n)$ denotes the $\sigma$-structure where each relation symbol $R_i$ is interpreted as $r_i$ and $\models$ denotes the usual notion of satisfaction. 

We see that the operations arising from first order formulas are a little bit different from those arising from propositional formulas in that now there are different sorts ${\mathcal P}(W^0)$, ${\mathcal P}(W^1)$, ${\mathcal P}(W^2)$, etc. Instead of a single-sorted algebraic signature, the first order formulas naturally give rise to a multisorted algebraic signature where there is a sort for each natural number. Every set $W$ gives rise to a multisorted algebra in this signature, with the operations as defined above. We call the algebras that arise in this way \textit{first order algebras}. 

In this paper we present a universal axiomatization of the subalgebras of first order algebras. The argument we use importantly also works when dealing with various reducts. For the classes of algebras $K$ of interest to us here, a universal axiomatization of the subalgebras of $K$ is the same thing as a universal axiomatization of the universal theory of $K$, hence the title of this paper is appropriate. It is not possible to replace our universal axiomatization with an equational one, nor even a Horn clause one, because the algebras in question are not closed under products. 

I would like to thank Alex Kruckman, Sridhar Ramesh, and Tom Scanlon for important input.

\section{Preliminaries}
\label{preliminaries}

Instead of dealing with a signature where there is a function symbol for every first order formula, it suffices to deal with a subsignature which will compositionally generate all the operations of interest. There is of course some degree of choice here, and we have generally chosen so as to make our axioms and arguments to follow more conveniently stated. Below is the (largest) signature we will use. Note the convention that ``$x\colon A$'' indicates $x$ is a constant of sort $A$, and ``$x\colon A\to B$'' indicates that $x$ is a function symbol with domain $A$ and codomain $B$ --- in this case $A$ may be a sort or a product of sorts.
\begin{definition} The multisorted \textbf{signature of first order algebras} (with equality) is given as follows.
\label{signature}
\begin{itemize}
\item We have a sort $n$ for each natural number $n\in\{0,1,2,\ldots\}$. The sort $n$ is intended to consist of $n$-ary relations on some set.
\item For each function $\alpha\colon\{1,\ldots,n\}\to\{1,\ldots,k\}$ we have a function symbol $\alpha\colon n\to k$. These are called \textbf{substitutions} and will correspond to the operations arising from atomic formulas.
\item For each $n$ we have a constant symbol $0^n$ belonging to sort $n$, which we may write as $0^n\colon n$. Likewise we have constant symbols $1^n\colon n$ and function symbols $\vee^n\colon n\times n\to n$, $\wedge^n\colon n\times n\to n$, and $\neg^n\colon n\to n$. We usually omit the superscript and write simply $0,1,\vee,\wedge,\neg$, leaving the arity implicit to the context. 
\item For each $n$ we have a function symbol $\exists^n\colon n+1\to n$, which we will generally write $\exists$. This will correspond to projection or existential quantification of the last coordinate. 
\item For each $n$ with $1\leq i,j\leq n$ we have a constant $\Delta_{i,j}^n\colon n$. These will correspond to equality of various coordinates.
\end{itemize}
\end{definition}

Before going further, let us introduce some notation involving the substitutions. A function $\alpha\colon\{1,\ldots,n\}\to\{1,\ldots,k\}$ can also be described as a sequence of length $n$ with repetition allowed taken from a $k$-element set. Thus, $\alpha$ gives a way of transforming any $k$-tuple into an $n$-tuple. Let $W$ be a set. Define $\alpha^{\text{tuple}}\colon W^k\to W^n$ to be the obvious function induced on tuples. In detail, $\alpha^{\text{tuple}}(x_1,\ldots,x_k):=(x_{\alpha(1)},\ldots,x_{\alpha(n)})$. This function on tuples in turn induces a function on relations of particular interest, the inverse image. I.e., define $\alpha^{\text{relation}}\colon {\mathcal P}(W^n)\to {\mathcal P}(W^k)$ by $\alpha^{\text{relation}}(r):=\{\bar{x}\mid \alpha^{\text{tuple}}(\bar{x})\in r\}$. An atomic formula like $R(x,x,y,x)$ in the variable context $(x,y,z)$ corresponds to the substitution $\alpha^{\text{tuple}}(x,y,z)=(x,x,y,x)$. Both give rise to the same operation on relations which accepts as input a 4-ary relation $r$ and outputs a 3-ary relation $\{(x,y,z)\mid (x,x,y,x)\in r\}$. We generally use lower case Greek letters near the beginning of the alphabet to denote substitutions, e.g.\ $\alpha,\beta,\gamma$.  

\begin{definition}
We say a \textbf{cylindrification} is a substitution where the function $\alpha\colon\{1,\ldots,n\}\to\{1,\ldots,k\}$ is increasing. In other words, as a tuple of symbols $\alpha^{\text{tuple}}(x_1,\ldots,x_k)$ is a subtuple of $(x_1,\ldots,x_k)$. We often use lowercase $c$ to denote cylindrifications. A collection $c_1,\ldots,c_m$ of cylindrifications are called \textbf{partitioning cylindrifications} when they take the form $c_i^{\text{tuple}}(\bar{x}_1,\cdots,\bar{x}_m)=\bar{x}_i$. I.e., we have $c_i\colon k_i\to n$ where $n=k_1+\cdots+k_m$, and the function $c_i\colon\{1,\ldots,k_i\}\to\{1,\ldots,n\}$ is given by $c_i(l)=l+\sum_{j=1}^{i-1} k_j$. 
\end{definition}

Here is an example of some partitioning cylindrifications: $c_1^{\text{tuple}}(x,y)=x$ and $c_2^{\text{tuple}}(x,y)=y$, so that $c_1^{\text{relation}}(r)=\{(x,y)\mid x\in r\}$ and $c_2^{\text{relation}}(s)=\{(x,y)\mid y\in s\}$. 

We use $\operatorname{id}$ to denote the identity substitution $\operatorname{id}\colon n\to n$ for each $n$. Let $\alpha\colon n\to k$ and $\beta\colon k\to m$ be two composable substitutions. Note that $(\beta\circ\alpha)^{\text{relation}}=\beta^{\text{relation}}\circ\alpha^{\text{relation}}$. To be able to say this axiomatically, we need different notation for the two compositions. We shall use $(\beta\circ\alpha)(r)$ for the former and $\beta(\alpha(r))$ for the latter. 

Now we define the classes of algebras of interest to us (in our leaner signature).

\begin{definition}
A \textbf{first order algebra} (with equality) is an algebra in the multisorted signature specified in Defintion~\ref{signature} that arises from some set $W$ in the following way:
\begin{itemize}
\item The interpretation of the sort $n$ is ${\mathcal P}(W^n)$, the collection of all $n$-ary relations on $W$.
\item The interpretation of a substitution $\alpha\colon n\to k$ is \[\alpha^{\text{relation}}\colon {\mathcal P}(W^n)\to {\mathcal P}(W^k)\] as defined above.
\item The Boolean operations $0,1,\vee,\wedge,\neg$ are interpreted as usual on each sort.
\item Projection $\exists^n\colon n+1\to n$ is interpreted as expected. In detail, \[\exists^n(r):=\{(x_1,\ldots,x_n)\mid \exists y((x_1,\ldots,x_n,y)\in r)\}\] 
\item The constant $\Delta_{i,j}^n\colon n$ is interpreted as \[\Delta_{i,j}^n:=\{(x_1,\ldots,x_n)\mid x_i=x_j\}\]
\end{itemize}
\end{definition}
We will have occasion to look at various reducts of our signature, and the corresponding reducts of the first order algebras are given appropriate names. E.g., the \textbf{positive existential algebras} (without equality) are the reducts of the first order algebras to the signature not containing negation (for any sort), and not containing the constants for equality, but otherwise containing all the symbols. The \textbf{positive quantifier-free algebras} (without equality) are when we restrict attention to just the substitutions and the lattice operations $0,1,\vee,\wedge$ for each sort. 

Just as all first order formulas can be constructed from the atomic formulas using the Boolean connectives and existential quantification, so too is every operation on relations arising from a first order formula equivalent to a term in our signature when looking at the first order algebras. Similarly, there are terms for every positive existential formula in the positive existential algebras, etc. 

In the paper we freely use the Boolean prime ideal theorem in various forms: We use the compactness theorem for first order logic. We use the fact that there are prime filters extending filters in any distributive lattice. Similarly, when we have a filter disjoint from an ideal in a distributive lattice, we may introduce a prime filter extending the filter and not containing anything from the ideal.\footnote{These are known to be equivalent (modulo ZF say) \cite{Henkin}. This property is implied by the axiom of choice, but is known to be strictly weaker \cite{HalpernLevy}. It is also known to be independent of ZF \cite{Feferman}.}  

\section{Positive Quantifier-free Algebras}
\label{positive quantifier-free}

The core of our argument about the first order algebras and various reducts can already be illustrated with the positive quantifier-free algebras, where our signature is restricted to the substitutions and the lattice operations $0,1,\vee,\wedge$ for each sort. The kind of operations on relations you can get in this situation is limited; for example, you can't express composition of binary relations. We begin by presenting the universal axioms which we will see axiomatize the subalgebras of the positive quantifier-free algebras --- this is our goal in this section. Note that I have placed a list of all the axioms considered in this paper (for the various reducts) at the end of the paper for ease of reference. Also note that each of these ``axioms" is actually an axiom schema. 

\begin{center}
\textbf{The Positive Quantifier-free Axioms}
\end{center}
\begin{enumerate}[(1)]
\setcounter{enumi}{-1}
\item When $c_1,\ldots,c_m$ are partitioning cylindrifications of arities $c_i\colon k_i\to (k_1+\cdots+k_m)$ we have the axiom: For all $r_1,s_1\colon k_1$, and all $r_2,s_2\colon k_2$, $\ldots$, and all $r_m,s_m\colon k_m$ we have  
\begin{center}
If $\dd\bigvee_{i=1}^m c_i(s_i)\geq\bigwedge_{i=1}^m c_i(r_i)$, then $s_i\geq r_i$ for some $i=1,\ldots,m$.
\end{center}
Note that by $y\geq x$ we actually mean $x=x\wedge y$ or equivalently (in the presence of the next axiom) $y=x\vee y$.
\item $0,1,\vee,\wedge$ form a (bounded) distributive lattice in each sort. In particular, each sort comes with a partial order $\leq$ defined by $r\leq s$ just in case $r=r\wedge s$ or equivalently $s=r\vee s$.
\item Substitutions preserve $0,1,\vee,\wedge$. E.g., when $\alpha\colon k\to n$ we write: For all $r,s\colon k$ we have $\alpha(r\wedge^k s)=\alpha(r)\wedge^n\alpha(s)$.
\item When $\alpha\colon k\to n$ and $\beta\colon n\to m$ we have the axiom: For all $r\colon k$ we have \begin{center}$(\beta\circ\alpha)(r)=\beta(\alpha(r))$.\end{center} 
Recall that by ``$\beta\circ\alpha$" we mean the function symbol which is the composition of these two substitution function symbols, while $\beta(\alpha(\bullet))$ is the usual composition within the algebra.
\item For each identity substitution $\operatorname{id}\colon n\to n$ we have the axiom: For all $r\colon n$ we have 
\begin{center}
$\operatorname{id}(r)=r$.
\end{center}
\end{enumerate}

\begin{remark}
Here are some notes on the axioms, and intuitive explanations. 
\begin{itemize}
\item Intuitively, axiom (0) says that if you have a union of ``orthogonal cylinders'' covering a ``rectangle'', then one of the cylinders has width larger than the width of the corresponding side of the rectangle. Note that the instances of axiom (0) are not Horn clauses (they are universal implications where the conclusion is a disjunction of atomic formulas).

\item Axioms (1)-(4), which are equations, axiomatize the Horn clause theory of positive quantifier-free algebras. When axiomatizing the algebras of larger signatures, we will see that the difference between the universal and the Horn clause theories is still just axiom (0). (E.g., compare Theorem~\ref{main theorem} and Proposition~\ref{Horn clause}.)

\item Axiom (4) is redundant in the context of axioms (0), (1), and (3). However, I include it because we will have occasion to omit axiom (0) when considering the Horn clause theory. To see this redundancy, note that $\operatorname{id}\colon n\to n$ just by itself is trivially a partitioning cylindrification. Thus by axiom (0) we have $\operatorname{id}(s)\geq \operatorname{id}(r)$ implies $s\geq r$. But axiom (3) gives $\operatorname{id}(\operatorname{id}(t))=\operatorname{id}(t)$, and so we can get $\operatorname{id}(t)=t$. 

\item It is straightforward to check that the axioms are all true in positive quantifier-free algebras. For illustration, let us verify axiom (0). Suppose that $s_i\not\geq r_i$ for each $i=1,\ldots,m$. Then there are $\bar{x}_i\in r_i-s_i$, and so $(\bar{x}_1\cdots\bar{x}_m)\in\bigwedge_i c_i(r_i)-\bigvee_i c_i(s_i)$.

\item It follows from axioms (0), (1), and (3) that everything in sort zero is either 0 or 1. To see this, note that $c_1,c_2\colon 0\to 0$ are partitioning cylindrifications, where $c_1=c_2=\operatorname{id}$. Then given any element $r$ of sort zero, we have 
\[c_1(r)\vee c_2(0)=r\vee 0\geq 1\wedge r=c_1(1)\wedge c_2(r)\]
So either $r\geq 1$ (and hence $r=1$) or $0\geq r$ (and hence $r=0$).

\item We may consider $0\not\geq 1$ in sort zero to be a special case of axiom (0), because the empty collection of cylindrifications trivially forms a partitioning cylindrification (of sort zero). The right hand side of the axiom in this case becomes an empty disjunction and therefore is considered as FALSE. If this offends the reader's sensibilities, then they may specifically add an axiom asserting that $0\not\geq 1$ in sort zero. Taken together with the previous remark, we see that an algebra satisfying axioms (0), (1), and (3) will have exactly two elements in sort zero.    
\end{itemize}
\end{remark}

The main result of this paper will be the following theorem.

\begin{theorem}
\label{main theorem}
Axioms (0)-(4) axiomatize the subalgebras of the positive quantifier-free algebras. 
\end{theorem}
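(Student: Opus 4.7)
The forward direction is a routine verification that each of axioms (0)--(4) holds in every positive quantifier-free algebra $\mathcal{P}(W^{\ast})$: axioms (1)--(4) are immediate from the definitions, and axiom (0) is verified in the remarks preceding the theorem.

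For the reverse direction, I would assume $A$ satisfies (0)--(4) and build an embedding $A\hookrightarrow\mathcal{P}(W^{\ast})$ in two stages. The first stage is a local construction. For each prime filter $F$ of the distributive lattice $A_{n}$, I would define a map
\[
h_{F}:A\to\mathcal{P}(\{1,\ldots,n\}^{\ast}),\qquad h_{F}(r):=\{\bar{y}\in\{1,\ldots,n\}^{m} : \alpha_{\bar{y}}(r)\in F\}
\]
for $r\in A_{m}$, where $\alpha_{\bar{y}}:m\to n$ is the substitution $j\mapsto y_{j}$. Using only axioms (1)--(4) together with the defining properties of a prime filter, one verifies routinely that $h_{F}$ is a homomorphism: preservation of $\wedge$ and of $0,1$ uses that $F$ is a proper filter, preservation of $\vee$ uses primeness, and preservation of substitutions uses axiom (3). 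Given any $a\not\leq b$ in $A_{n}$, the prime ideal theorem produces an $F\ni a$ with $b\notin F$, and the identity substitution then witnesses $(1,\ldots,n)\in h_{F}(a)\setminus h_{F}(b)$. This first stage uses no instance of axiom (0).

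The second stage is to assemble these local representations into a single embedding $h:A\hookrightarrow\mathcal{P}(W^{\ast})$. The naive candidate $\prod_{F}h_{F}$ lands in $\prod_{F}\mathcal{P}(\{1,\ldots,n_{F}\}^{\ast})$, but this product is generally not itself a positive quantifier-free algebra---axiom (0) fails already in a product of two small such algebras, as one can check directly with $\mathcal{P}(\{0\}^{\ast})\times\mathcal{P}(\{0\}^{\ast})$. I would instead take $W$ to be the disjoint union $\bigsqcup_{F}\{1,\ldots,n_{F}\}$ over a separating family of prime filters and define $h(r)\subseteq W^{m}$ so as to agree with each $h_{F}$ on the \emph{pure} cubes $\{1,\ldots,n_{F}\}^{m}$ while extending coherently across the \emph{mixed} products $W_{F_{1}}\times\cdots\times W_{F_{m}}$. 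On a mixed product, membership in $h(r)$ would be determined by a prime filter of $A_{m}$ extending the rectangular filter generated by the sort-$1$ pullbacks of the local $F_{j}$'s; axiom (0) guarantees this rectangular filter is proper (the argument in the remarks: $\bigwedge_{j}c_{j}(s_{j})=0$ forces some $s_{j}=0$, impossible in a prime filter) and drives the coherent choice of prime extensions required to make $h$ preserve non-surjective substitutions across mixed products.

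I expect the second stage to be the main obstacle. The local step is essentially formal; the entire content of axiom (0) is felt in the global step, where it rules out precisely the configurations that would obstruct a coherent system of prime extensions consistent with every substitution.
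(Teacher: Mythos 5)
Your first stage is exactly the paper's Lemma~\ref{morphism}, and your observation that axiom (0) makes the ``rectangular'' filter generated by $\bigwedge_j c_j(r_j)$ behave well is the heart of the paper's Lemma~\ref{sum}. But the second stage, as described, has a genuine gap, and it is located precisely where you predict the difficulty to be. Two problems. First, your seed data for a mixed tuple is wrong: you propose to generate the rectangular filter from the \emph{sort-$1$ pullbacks} of the local filters. If a mixed $m$-tuple contains two coordinates $a_1,a_2$ drawn from the same block $\{1,\ldots,n_F\}$, the prime filter you assign to it must restrict (under the cylindrification $c\colon 2\to m$) to the correct $2$-type of $(a_1,a_2)$ determined by $F$, not merely to some prime extension of the rectangle of the two $1$-types; otherwise $h$ fails to be morphic for substitutions relating the $m$-tuple to its pure subtuples. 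The correct seed is the filter generated by the pullbacks of the \emph{full} types of the maximal pure subtuples, combined via partitioning cylindrifications --- and one needs not just properness of this filter but its disjointness from the ideal generated by $\bigvee_j c_j(s_j)$ with $s_j\notin F^j$ (this is what axiom (0) actually delivers), since otherwise the prime extension $G$ only satisfies $c_j^{-1}(G)\supseteq F^j$ and the separation of $a\not\leq b$ is lost.

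Second, and more seriously, choosing a prime extension $G_{\bar a}$ independently for each tuple $\bar a$ gives no coherence: for $h$ to preserve an arbitrary substitution $\alpha$ you need $G_{\alpha^{\mathrm{tuple}}(\bar a)}=\alpha^{-1}(G_{\bar a})$ for \emph{every} tuple and every $\alpha$, i.e.\ a globally compatible system of complete types over all of $W$ at once. Axiom (0) guarantees that each individual extension problem is solvable, but it does not produce a simultaneous solution, and you give no mechanism for one. The paper sidesteps this entirely: it writes down a first order theory $T$ whose models are injective morphisms from $L$ to positive quantifier-free algebras (morphic conditions plus the literal diagram of all prime filters), and proves finite satisfiability by amalgamating the finitely many prime filters in play into a single master prime filter $G$ on the sort $k_1+\cdots+k_m$ via Lemma~\ref{sum}; the compactness theorem then does the infinite coherent assembly for you, at the cost of the model no longer being the explicit disjoint union. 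To repair your argument you would either have to import that compactness step, or replace it with a transfinite construction that threads a compatible choice of types through all finite subsets of $W$ --- which is essentially compactness again.
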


A basic step in our proof of Theorem~\ref{main theorem} will be the observation that if $L$ is an abstract algebra that satisfies the axioms above then a prime filter on any one of the sorts of $L$ gives rise to a morphism from $L$ to a concrete positive quantifier-free algebra.\footnote{Axiom (1) ensures that each sort is a distributive lattice, and so it makes sense to speak of a prime filter on a sort.}  If you think of the abstract algebra as a theory, and the morphism to a concrete algebra as a model of this theory, then intuitively Lemma~\ref{morphism} says that any prime filter $F$ on sort $n$ is the type of an $n$-tuple $\bar{F}=(F_1,\ldots,F_n)$ in some model. In fact, we can take the model to just consist of this $n$-tuple.
Lemma~\ref{sum} will allow us to realize finitely many types at once, and so then by the compactness theorem we will be able to realize all types at once, yielding an embedding.

\begin{lemma}
\label{morphism}
Let $F$ be a prime filter on sort $n$ of some algebra $L$ satisfying the axioms (1), (2), and (3). Let $F_1,\ldots,F_n$ be distinct symbols. Let $W=\{F_1,\ldots,F_n\}$. Let $A(W)$ denote the positive quantifier-free algebra of relations on the set $W$. Define a function (on each sort) $\varphi\colon L\to A(W)$ by putting, for each $r$ in sort $k$ of $L$ and each substitution $\alpha\colon k\to n$,
\[\varphi(r)=\{\alpha^{\operatorname{tuple}}(F_1,\ldots,F_n)\mid \alpha(r)\in F\}\]
Then $\varphi$ is a morphism.
\end{lemma}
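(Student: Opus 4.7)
The plan is to verify, in order, that $\varphi$ is well-defined and then that it preserves each constant symbol, each lattice operation, and each substitution. Since the symbols $F_1,\ldots,F_n$ are distinct, every element of $W^k$ can be written as $\alpha^{\operatorname{tuple}}(F_1,\ldots,F_n)$ for a \emph{unique} substitution $\alpha\colon k\to n$ (read off the indices). This immediately gives well-definedness and also lets me freely parameterize tuples in $W^k$ by substitutions in what follows.

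Preservation of the constants and lattice operations in each sort is straightforward from axiom (2) together with the fact that $F$ is a (proper) prime filter. For example, $\alpha^{\operatorname{tuple}}(\bar F)\in\varphi(r\wedge s)$ iff $\alpha(r\wedge s)=\alpha(r)\wedge\alpha(s)\in F$ (by axiom (2)), iff both $\alpha(r)\in F$ and $\alpha(s)\in F$ (filter property), iff $\alpha^{\operatorname{tuple}}(\bar F)\in\varphi(r)\cap\varphi(s)$. The case of $\vee$ is analogous using primeness; the cases of $0$ and $1$ use $0\notin F$ and $1\in F$ after reducing $\alpha(0^k)$ to $0^n$ and $\alpha(1^k)$ to $1^n$ via axiom (2), yielding $\varphi(0^k)=\emptyset$ and $\varphi(1^k)=W^k$.

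Preservation of a substitution $\beta\colon k\to k'$ is the main bit of bookkeeping. Fix $r$ of sort $k$ and write a generic tuple in $W^{k'}$ as $\bar y=\alpha^{\operatorname{tuple}}(\bar F)$ for the unique $\alpha\colon k'\to n$. The index-level identity $\beta^{\operatorname{tuple}}\circ\alpha^{\operatorname{tuple}}=(\alpha\circ\beta)^{\operatorname{tuple}}$ shows that $\bar y\in\beta^{\operatorname{relation}}(\varphi(r))$ iff $\beta^{\operatorname{tuple}}(\bar y)=(\alpha\circ\beta)^{\operatorname{tuple}}(\bar F)\in\varphi(r)$, iff $(\alpha\circ\beta)(r)\in F$. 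Axiom (3) then rewrites this as $\alpha(\beta(r))\in F$, which is exactly the condition $\bar y\in\varphi(\beta(r))$.

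The only real source of confusion I anticipate is the direction-flip between function symbols and their actions on tuples --- a substitution symbol $\alpha\colon n\to k$ acts on relations as $\alpha^{\operatorname{relation}}\colon \mathcal{P}(W^n)\to\mathcal{P}(W^k)$ but on tuples as $\alpha^{\operatorname{tuple}}\colon W^k\to W^n$ --- so one has to keep careful track of which convention is in play at each step. Once this is handled, axioms (2) and (3) together with the prime-filter properties of $F$ discharge every clause of the verification.
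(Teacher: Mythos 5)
Your proposal is correct and follows essentially the same route as the paper's proof: the key observation that distinctness of $F_1,\ldots,F_n$ makes the parameterization of tuples by substitutions unique, preservation of the lattice structure via axiom (2) and the prime filter properties, and preservation of substitutions via the identity $\beta^{\operatorname{tuple}}\circ\alpha^{\operatorname{tuple}}=(\alpha\circ\beta)^{\operatorname{tuple}}$ combined with axiom (3). Your explicit warning about the contravariance between $\alpha^{\operatorname{tuple}}$ and $\alpha^{\operatorname{relation}}$ is exactly the right thing to keep track of, and the verification goes through as you describe.
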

\begin{proof}
Note that 
\[\alpha^{\operatorname{tuple}}(F_1,\ldots,F_n)\in\varphi(r)\iff \alpha(r)\in F\]
because every tuple (of any length) from $W$ can be expressed as $\alpha^{\text{tuple}}(\bar{F})$ for a unique substitution $\alpha$.  We now proceed to check that $\varphi$ is a morphism. First observe that $\varphi$ preserves 0, i.e.\ $\varphi(0)=\emptyset$ for each sort, because $\alpha(0)=0\not\in F$ by axiom (2). Similarly, $\varphi(1)=W^k$ because $\alpha(1)=1\in F$. 

The preservation of $\vee$ and $\wedge$ also follow from axiom (2) via the following calculations:
\begin{align*}
\alpha^{\text{tuple}}(\bar{F})\in\varphi(r)\cup\varphi(s)&\iff\alpha(r)\in F\text{ or }\alpha(s)\in F\\
&\iff \alpha(r\vee s)\in F\\
&\iff \alpha^{\text{tuple}}(\bar{F})\in\varphi(r\vee s)
\end{align*}
and
\begin{align*}
\alpha^{\text{tuple}}(\bar{F})\in\varphi(r)\cap\varphi(s)&\iff\alpha(r)\in F\text{ and }\alpha(s)\in F\\
&\iff \alpha(r\wedge s)\in F\\
&\iff\alpha^{\text{tuple}}(\bar{F})\in\varphi(r\wedge s)
\end{align*}

Finally, we check the preservation of substitutions using axiom (3):
\begin{align*}
\alpha^{\text{tuple}}(\bar{F})\in\beta^{\text{relation}}(\varphi(r))&\iff\beta^{\text{tuple}}(\alpha^{\text{tuple}}(\bar{F}))\in\varphi(r)\\
&\iff(\alpha\circ\beta)^{\text{tuple}}(\bar{F})\in\varphi(r)\\
&\iff (\alpha\circ\beta)(r)\in F\\
&\iff \alpha(\beta(r))\in F\\
&\iff \alpha^{\text{tuple}}(\bar{F})\in\varphi(\beta(r))
\end{align*}
\end{proof}

\begin{proposition}[B\"orner]
\label{Horn clause}
Axioms (1)-(4) axiomatize the class of subalgebras of products of positive quantifier-free algebras. Thus, we have found an equational axiomatization of the Horn clause theory of positive quantifier-free algebras. 
\end{proposition}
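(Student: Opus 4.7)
The plan is to mimic the scheme behind Theorem~\ref{main theorem}, but, having dropped axiom~(0), to aim only for an embedding into a \emph{product} of positive quantifier-free algebras rather than a single one. Soundness is immediate: each of (1)--(4) is an equation that is routine to verify in every concrete positive quantifier-free algebra, and equations pass to subalgebras and products. All the content therefore sits in the converse.

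For the converse, let $L$ satisfy (1)--(4). I will separate pairs of distinct elements one pair at a time, by feeding a suitable prime filter into Lemma~\ref{morphism}. Fix a sort $n$ and elements $r \neq s$ of sort $n$; without loss of generality $r \not\leq s$. By axiom~(1), sort $n$ of $L$ is a distributive lattice, and the principal filter $\uparrow r$ is disjoint from the principal ideal $\downarrow s$ (else $r \leq s$). The prime filter theorem, invoked in the preliminaries, then furnishes a prime filter $F$ on sort $n$ with $\uparrow r \subseteq F$ and $F \cap \downarrow s = \emptyset$, so $r \in F$ but $s \notin F$. Lemma~\ref{morphism} applied to $F$ produces a morphism $\varphi_F \colon L \to A(W_F)$ with $W_F = \{F_1,\dots,F_n\}$; specialising $\alpha = \operatorname{id}$ in its defining equation gives $\bar F \in \varphi_F(r) \iff r \in F$, so $\varphi_F(r) \neq \varphi_F(s)$. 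Taking the product of the $\varphi_F$ over all such separating pairs yields a morphism $\Phi \colon L \to \prod A(W_{r,s})$ that is injective on every sort, i.e.\ an embedding of $L$ into a product of positive quantifier-free algebras.

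The ``thus'' assertion is then a standard soft observation: Horn clauses are preserved under $S$ and $P$, so any Horn clause true throughout the class $K$ of positive quantifier-free algebras is true throughout $SP(K)$, which by what we have just shown is exactly the class of models of (1)--(4); by first-order completeness any such Horn clause is therefore a consequence of (1)--(4). Conversely, each of (1)--(4) is itself a Horn clause (indeed an equation) valid in $K$, so (1)--(4) axiomatize the Horn clause theory of $K$ as well as $SP(K)$. The only step with any real content is the separation argument, and that is already packaged for us by Lemma~\ref{morphism} together with the distributive-lattice prime filter theorem, so I do not anticipate any substantive obstacle.
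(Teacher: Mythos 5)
Your proposal is correct and follows essentially the same route as the paper: separate each pair $r \neq s$ by a prime filter, feed it into Lemma~\ref{morphism}, use the identity substitution (where axiom (4), $\operatorname{id}(r)=r$, is the step that makes $\bar F \in \varphi_F(r) \iff r \in F$) to see the images differ, and take the product of these morphisms. The explicit soundness check and the $SP$-preservation argument for the ``thus'' clause are standard additions the paper leaves implicit.
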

\begin{proof}
Let $L$ be an algebra satisfying axioms (1)-(4). We want to find an embedding of $L$ into a product of positive quantifier-free algebras.
Using axiom (4), we can make sure to separate any two distinct points using a morphism from Lemma~\ref{morphism}: Let $r\neq s$ in the same sort. Then there is a prime filter $F$ containing, say, $r$ and not $s$.\footnote{Recall we have taken the Boolean prime ideal theorem as an assumption.} Let $\varphi$ be the morphism obtained from Lemma~\ref{morphism}. Then $\bar{F}=\operatorname{id}(\bar{F})\in\varphi(r)-\varphi(s)$ because $\operatorname{id}(r)=r\in F$ and $\operatorname{id}(s)=s\not\in F$, by axiom (4). Taking a product of a bunch of such morphisms, we actually get an embedding of an algebra satsifying axioms (1)-(4) into a product of positive quantifier-free algebras. 
\end{proof}

Observe that this does not automatically give us the universal theory, because the positive quantifier-free algebras are not closed under products (even just look at the zero sort and observe that there must be exactly two elements in it). Note that while ${\mathcal P}(\biguplus W_i)=\prod {\mathcal P}(W_i)$ it is not the case that ${\mathcal P}((\biguplus W_i)^n)=\prod {\mathcal P}(W_i^n)$.

In order to have an embedding into an actual positive quantifier-free algebra instead of a product of them, we show that we can deal with all the prime filters at once by showing a certain first order theory is satisfiable. 

\vspace{0.5 cm} 

\noindent\textit{First Part of Proof of Theorem~\ref{main theorem}.}
Given an algebra $L$ that satisfies axioms (0)-(4), let us introduce a relation symbol $r$ of arity $n$ for each element $r$ of $L$ of each sort $n$. We also introduce constants $F_1,\ldots,F_n$ for each prime filter $F$ of $L$ on sort $n$. Let $T$ be the first order theory in this language with the following axiom schemata:
\begin{enumerate}[(A)]
\item $r(\bar{F})$ when $r\in F$ and $\neg r(\bar{F})$ when $r\not\in F$
\item The \textbf{morphic conditions}, i.e.
\begin{enumerate}[(i)]
\item $\forall\bar{x}\,\,\, \neg 0(\bar{x})$. We have such a sentence for the 0 of each sort.
\item $\forall\bar{x}\,\,\, 1(\bar{x})$
\item $\forall\bar{x}\,\,\, (r\vee s)(\bar{x})\iff (r(\bar{x})\text{ or } s(\bar{x}))$. Note that in $(r\vee s)(\bar{x})$ the ``$\vee$" is an operation of the algebra, while in $(r(\bar{x})\text{ or }s(\bar{x}))$ the ``or" is a logical symbol of the ambient first order logic. We have such a sentence for every pair $(r,s)$ of elements from the same sort.
\item $\forall\bar{x}\,\,\, (r\wedge s)(\bar{x})\iff (r(\bar{x})\text{ and } s(\bar{x}))$
\item $\forall\bar{x}\,\,\, (\alpha(r))(\bar{x})\iff r(\alpha^{\operatorname{tuple}}(\bar{x}))$. We have such a sentence for every substitution $\alpha\colon k\to n$ and element $r$ in $L$ of sort $k$.
\end{enumerate}
\end{enumerate}

It is straightforward to verify that a model of the morphic conditions of $T$ is (essentially) the same thing as a morphism from the algebra $L$ to a positive quantifier-free algebra. Item (A) of $T$ ensures that this morphism is 1-1 (on each sort). To show that this theory is satisfiable, we show that the theory is finitely satisfiable and then use the compactness theorem. Thus, it suffices to find a model satisfying all of item (B) but the instances of item (A) involving only finitely many prime filters $F^1,\ldots,F^m$. (We use superscript here to avoid confusion of the prime filters with the constants associated to each of them.) 

\noindent\textit{End of First Part of Proof of Theorem~\ref{main theorem}}

\vspace{0.5 cm}

The key idea for how to proceed is to assemble these finitely many prime filters into one master prime filter on a larger sort. We formalize this in the following lemma.

\begin{lemma}
\label{sum}
Let $L$ be an algebra that satisfies axioms (0), (1), and (2). Let $k_1+\cdots+k_m=n$. Let $c_i\colon k_i\to n$ be partitioning cylindrifications. Let $F^i$ be prime filters on sort $k_i$ respectively. Then there is a prime filter $G$ on sort $n$ such that for each $i=1,\ldots,m$, for all $r$ in $L$ of sort $k_i$ we have $c_i(r)\in G$ if and only if $r\in F^i$.
\end{lemma}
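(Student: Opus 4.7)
The plan is to construct $G$ by invoking the Boolean prime ideal theorem (in its distributive-lattice form: a filter disjoint from an ideal extends to a prime filter disjoint from the ideal). Let $I^i := L_{k_i}\setminus F^i$, which is a prime ideal on sort $k_i$ because $F^i$ is a prime filter. Let $A$ be the filter on sort $n$ generated by $\bigcup_i\{c_i(r):r\in F^i\}$, and let $B$ be the ideal on sort $n$ generated by $\bigcup_i\{c_i(s):s\in I^i\}$. If I can show $A\cap B=\emptyset$, then any prime filter $G$ extending $A$ and disjoint from $B$ will do the job: for $r\in F^i$ we get $c_i(r)\in A\subseteq G$, and for $r\notin F^i$ we have $r\in I^i$, so $c_i(r)\in B$, hence $c_i(r)\notin G$.

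First I would normalize what elements of $A$ and $B$ look like. Since $F^i$ is closed under meets and each $F^i$ contains $1$, and since substitutions preserve $\wedge$ (axiom (2)), every element of $A$ is $\geq \bigwedge_{i=1}^m c_i(r_i)$ for some choice of $r_i\in F^i$ (take $r_i=1$ for indices that do not appear in a given generator). Dually, using that $I^i$ is closed under joins and contains $0$, and that substitutions preserve $\vee$, every element of $B$ is $\leq \bigvee_{i=1}^m c_i(s_i)$ for some $s_i\in I^i$.

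The heart of the argument, and the only nontrivial step, is the disjointness $A\cap B=\emptyset$. Suppose for contradiction that some element lies in both. Then there exist $r_i\in F^i$ and $s_i\in I^i$ with
\[
\bigvee_{i=1}^m c_i(s_i)\;\geq\;\bigwedge_{i=1}^m c_i(r_i).
\]
This is exactly the hypothesis of axiom (0) applied to the partitioning cylindrifications $c_1,\ldots,c_m$. The conclusion of axiom (0) yields some index $i$ with $s_i\geq r_i$. But then $s_i\in F^i$ (since $F^i$ is upward closed and $r_i\in F^i$), contradicting $s_i\in I^i=L_{k_i}\setminus F^i$. This is where the real content lies; the rest of the proof is essentially bookkeeping about filters and ideals in distributive lattices.

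Finally, with $A\cap B=\emptyset$ established and each sort a distributive lattice by axiom (1), the prime filter extension theorem (which I am treating as available, per the footnote in the Preliminaries) produces the desired prime filter $G$ on sort $n$ containing $A$ and disjoint from $B$. The two verifications above then give $c_i(r)\in G\Leftrightarrow r\in F^i$, completing the proof. The main obstacle in the proof is thus recognizing the form $\bigvee c_i(s_i)\geq\bigwedge c_i(r_i)$ as precisely the trigger for axiom (0); once that is seen, axiom (0) does all the work of gluing the local prime filters $F^i$ into a coherent global prime filter $G$.
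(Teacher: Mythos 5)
Your proposal is correct and follows essentially the same route as the paper: the paper's $G_F$ and $G_I$ are exactly your filter $A$ and ideal $B$ (described directly in the normalized form $z\geq\bigwedge_i c_i(r_i)$, resp.\ $z\leq\bigvee_i c_i(s_i)$, with the missing indices padded by $c_j(1)$ and $c_j(0)$), and the disjointness is established by the identical appeal to axiom (0). Nothing further is needed.
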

\begin{proof}
Let $A$ be the distributive lattice which is sort $n$ of $L$. Define 
 \[G_F:=\{z\in A\mid z\geq\bigwedge_{i=1}^m c_i(r_i)\text{ for some } r_i\in F^i\}\]
and 
\[G_I:=\{z\in A\mid \bigvee_{i=1}^m c_i(s_i)\geq z\text{ for some }s_i\not\in F^i\}\]
We claim that $G_F$ is a filter, $G_I$ is an ideal, and they are disjoint. First we show they are disjoint. Suppose, to get a contradiction, that  $\bigvee_i c_i(s_i)\geq z\geq\bigwedge_i c_i(r_i)$ where $s_i\not\in F^i$ and $r_i\in F^i$. Then $s_i\geq r_i$ for some $i$ by axiom (0), implying that $s_i\in F^i$ because $F^i$ is upward-closed, but as noted $s_i\not\in F^i$, and we have a contradiction. 

Next, observe that $1\in G_F$ since $1\in F^i$ for each $i$ and $1\geq \bigwedge_i c_i(1)$. Similarly $0\in G_I$. It follows at once that $0\not\in G_F$ and $1\not\in G_I$ because $G_F$ and $G_I$ are disjoint. 

It is obvious from the definitions that $G_F$ is upward-closed and $G_I$ is downward closed. 

Finally, suppose $z,z'\in G_F$. Let $z\geq \bigwedge_i c_i(r_i)$ and $z'\geq \bigwedge_i c_i(r_i')$ where $r_i,r_i'\in F^i$. Then 
$z\wedge z'\geq\bigwedge_i c_i(r_i\wedge r_i')$ by axiom (2), and $r_i\wedge r_i'\in F^i$ for each $i$. So $z\wedge z'\in G_F$. The argument that $z,z'\in G_I$ implies $z\vee z'\in G_I$ is similar.

Because $G_F$ and $G_I$ are disjoint, there is a prime filter $G$ such that $G_F\subseteq G$ and $G\cap G_I=\emptyset$. This $G$ is a prime filter satisfying the desired property. If $r\in F^i$, then $c_i(r)= c_i(r)\wedge\bigwedge_{j\neq i}c_j(1)\in G_F$, and so $c_i(r)\in G$. If $r\not\in F^i$, then $c_i(r)=c_i(r)\vee\bigvee_{j\neq i}c_j(0)\in G_I$, and so $c_i(r)\not\in G$. 
\end{proof}  

\noindent\textit{Continuation of Proof of Theorem~\ref{main theorem}}. Armed with this lemma, we may return to showing that the theory $T$ is finitely satisfiable. Given our finitely many prime filters $F^1,\ldots,F^m$, there is by Lemma~\ref{sum} a prime filter $G$ such that 
\[c_i(r)\in G\iff r\in F^i\]
for each $i$. Introduce distinct symbols $G_1,\ldots,G_n$, where $n=k_1+\cdots+k_m$, the sum of the arities of the $F^i$. Let $W=\{G_1,\ldots,G_n\}$. We will interpret the constants corresponding to each prime filter $F^i$ by $c_i^{\operatorname{tuple}}(\bar{G})$ respectively. By Lemma~\ref{morphism} we know that $\varphi\colon L\to A(W)$ defined by
\[\alpha^{\operatorname{tuple}}(\bar{G})\in\varphi(r)\iff \alpha(r)\in G\]
is a morphism. Further,
\[c_i^{\operatorname{tuple}}(\bar{G})\in\varphi(r)\iff c_i(r)\in G\iff r\in F^i\]
as desired. So $\varphi$ yields the desired model of the small portion of $T$ we gave ourselves. 

\noindent\textit{End of Proof of Theorem~\ref{main theorem}} 

\vspace{0.5 cm}

Unlike powerset algebras which have equivalent equational, Horn clause, and universal theories, the positive quantifier-free algebras (and other first order algebra reducts considered below) have only equivalent equational and Horn clause theories.

\section{Adding Negation, Projection, Equality}

\subsection{Negation}

It is relatively easy to extend the results of Section~\ref{positive quantifier-free} to algebras with negation, yielding an axiomatization of the subalgebras of the quantifier-free algebras. 

\begin{theorem}\label{negation theorem}
Axioms (0)-(6) axiomatize the subalgebras of the quantifier-free algebras. (Axioms (5) and (6) are given below.) 
\end{theorem}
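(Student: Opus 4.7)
The plan is to mimic the proof of Theorem~\ref{main theorem} step by step, making only the minimal adjustments needed to account for negation. Axioms (5) and (6) will presumably assert that each sort is a Boolean algebra (so $r\wedge\neg r=0$ and $r\vee\neg r=1$) and that substitutions preserve negation (i.e.\ $\alpha(\neg r)=\neg\alpha(r)$). Soundness of these additional axioms in quantifier-free algebras is immediate: in $\mathcal P(W^n)$ complementation turns each sort into a Boolean algebra, and inverse images under $\alpha^{\text{tuple}}$ commute with taking complements.

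For the hard direction, I would upgrade the Morphism Lemma (Lemma~\ref{morphism}) as follows: if $L$ satisfies axioms (1)-(3), (5), and (6) and $F$ is a prime filter on sort $n$ of $L$, then the $\varphi$ defined there is a morphism of quantifier-free algebras. Since axiom (5) makes each sort a Boolean algebra, every prime filter is an ultrafilter, so $\alpha(r)\in F$ iff $\neg\alpha(r)\notin F$. Combining this with axiom (6) gives
\[
\alpha^{\text{tuple}}(\bar F)\in\varphi(\neg r)\iff \alpha(\neg r)\in F\iff \neg\alpha(r)\in F\iff \alpha(r)\notin F\iff \alpha^{\text{tuple}}(\bar F)\in\neg\varphi(r),
\]
so $\varphi$ preserves negation on every sort.

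The Sum Lemma (Lemma~\ref{sum}) is used verbatim: its statement and proof only invoke axioms (0), (1), (2), so adding negation to the signature changes nothing. Indeed, a prime filter in a Boolean algebra is exactly a prime filter in its underlying distributive lattice, so everything that was said about prime filters in sort $n$ carries over intact, and Lemma~\ref{sum} still produces the master prime filter $G$ on the larger sort.

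With these pieces in place, I would re-run the compactness argument from the proof of Theorem~\ref{main theorem}. Set up the first order theory $T$ exactly as before, but add one new morphic condition (vi)
\[
\forall\bar x\;\; (\neg r)(\bar x)\iff \neg r(\bar x)
\]
for each element $r$ of $L$. The finite-satisfiability argument goes through unchanged: given finitely many prime filters $F^1,\ldots,F^m$, apply Lemma~\ref{sum} to obtain $G$, and use the upgraded Morphism Lemma to turn $G$ into a concrete morphism $\varphi\colon L\to A(W)$ realizing the chosen finite portion of axiom (A). The injectivity that the axiom (A) instances demand is automatic from axiom (4), as in Proposition~\ref{Horn clause}. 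Thus $T$ is satisfiable, and a model of $T$ yields the desired embedding of $L$ into a quantifier-free algebra. There is no real obstacle beyond observing that prime filters in the Boolean setting are automatically ultrafilters; once this is noted, the modular structure of the Positive Quantifier-free proof does all the work.
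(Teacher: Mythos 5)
Your proposal is correct and follows essentially the same route as the paper: the only substantive change from Theorem~\ref{main theorem} is upgrading Lemma~\ref{morphism} so that $\varphi$ preserves negation, which you do exactly as the paper does --- the complementation laws force every prime filter to be an ultrafilter (so $\neg r\in F$ iff $r\notin F$), and commuting substitutions past negation finishes the check --- after which Lemma~\ref{sum} and the compactness argument go through verbatim with the added morphic condition for $\neg$. (Your labels for axioms (5) and (6) are swapped relative to the paper's, but the content is identical.)
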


Examining the argument of Section~\ref{positive quantifier-free}, the only place where there needs to be significant change is for Lemma~\ref{morphism}, where we need to now also verify that the function defined is morphic for negation. In other words, we want $\varphi(\neg r)=\neg\varphi(r)$. I.e., we want $\alpha(\neg r)\in F\iff\alpha(r)\not\in F$. One way to accomplish this is to add the following two equational axiom schemata to our list:

\begin{center}
\textbf{Axioms for Negation}
\end{center}
\begin{enumerate}[(1)]
\setcounter{enumi}{4}
\item When $\alpha\colon k\to n$ is a substitution we have the axiom: For all $r\colon k$ we have \begin{center}$\alpha(\neg r)=\neg\alpha(r)$.\end{center}
\item For each sort $n$, we have the axiom: For all $r\colon n$ we have \begin{center}$r\vee\neg r=1$ and $r\wedge\neg r=0$.\end{center}
\end{enumerate}

It is easy to check that these axioms are true in quantifier-free algebras.

\begin{lemma}
\label{morphism with negation}
Let $F$ be a prime filter in sort $n$ of some algebra $L$ satisfying the axioms (1)-(3), and (5)-(6). Let $F_1,\ldots,F_n$ be distinct symbols. Let $W=\{F_1,\ldots,F_n\}$. Let $A(W)$ denote the quantifier-free algebra of relations on the set $W$. Define a function (on each sort) $\varphi\colon L\to A(W)$ by putting
\[\alpha^{\operatorname{tuple}}(\bar{F})\in\varphi(r)\iff \alpha(r)\in F\]
Then $\varphi$ is a morphism.
\end{lemma}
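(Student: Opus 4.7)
The plan is to recycle the argument from Lemma~\ref{morphism} essentially verbatim, since the definition of $\varphi$ is identical and the axioms (1), (2), and (3) have not changed. That previous proof already established that $\varphi$ preserves $0$, $1$, $\vee$, $\wedge$, and substitutions, and those verifications go through word-for-word here. The only genuinely new obligation is to verify that $\varphi$ preserves negation, i.e.\ that $\varphi(\neg r) = W^k \setminus \varphi(r)$ on each sort $k$.

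For the negation clause I would unfold the definitions: for a substitution $\alpha\colon k\to n$, we have $\alpha^{\operatorname{tuple}}(\bar F)\in\varphi(\neg r)$ iff $\alpha(\neg r)\in F$. By axiom (5) this last condition equals $\neg\alpha(r)\in F$, so the task reduces to showing the ``tertium non datur'' identity for $F$ itself, namely $\neg s\in F\iff s\notin F$ for every element $s$ of sort $n$. The forward direction follows because $F$ cannot contain both $s$ and $\neg s$: by axiom (6) their meet is $0$, and since $F$ is a (proper) filter, $0\notin F$. The backward direction uses primality: axiom (6) gives $s\vee\neg s=1\in F$, so by primality of $F$ either $s\in F$ or $\neg s\in F$, and if $s\notin F$ then the second disjunct must hold.

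The main (and really only) obstacle is just this book-keeping about proper prime filters, and it is very mild: once one has observed that $F$ excludes $0$ and contains $1$ (which is automatic for a proper prime filter on sort $n$), axioms (5) and (6) do all the work. Putting this negation check alongside the inherited checks from Lemma~\ref{morphism} completes the proof that $\varphi$ is a morphism of quantifier-free algebras.
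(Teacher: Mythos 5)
Your proposal is correct and follows essentially the same route as the paper: reuse the verification from Lemma~\ref{morphism} for the lattice operations and substitutions, then reduce the negation clause via axiom (5) to the fact that for a prime filter $F$, $\neg s\in F\iff s\notin F$, which follows from axiom (6) exactly as you argue. The paper states this last fact without spelling out the properness/primality bookkeeping, but your expansion of it is precisely the intended justification.
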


\begin{proof}
The proof is the same as that for Lemma~\ref{morphism}, except now we must also check that $\varphi(\neg r)=\neg\varphi(r)$. Axiom (6) ensures that for any prime filter $F$, $\neg r\in F$ if and only if $r\not\in F$. Then using axiom (5) we have $\alpha(\neg r)\in F$ if and only if $\neg\alpha(r)\in F$ if and only if $\alpha(r)\not\in F$.
\end{proof}

As before we get the following proposition:

\begin{proposition}[B\"orner]
\label{Horn negation}
Axioms (1)-(6) axiomatize the class of subalgebras of products of quantifier-free algebras. Thus, we have found an equational axiomatization of the Horn clause theory of quantifier-free algebras.
\end{proposition}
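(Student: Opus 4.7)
The plan is to run the same argument as in Proposition~\ref{Horn clause}, substituting Lemma~\ref{morphism with negation} for Lemma~\ref{morphism}. Soundness is the easy direction: each of the axioms (1)--(6) holds in every quantifier-free algebra and is preserved by taking subalgebras and products, so every algebra in $SP$ of the class of quantifier-free algebras satisfies (1)--(6).

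For the other direction, let $L$ be any algebra satisfying axioms (1)--(6). I want to produce, for every pair of distinct elements $r \neq s$ of the same sort $n$ of $L$, a morphism $\varphi_{r,s} \colon L \to A(W)$ to some quantifier-free algebra $A(W)$ with $\varphi_{r,s}(r) \neq \varphi_{r,s}(s)$; then the product map $\prod_{r \neq s} \varphi_{r,s}$ embeds $L$ into a product of quantifier-free algebras. To build $\varphi_{r,s}$, note that by axioms (1) and (6) each sort of $L$ is a Boolean algebra, so by the Boolean prime ideal theorem there is a prime (= ultra) filter $F$ on sort $n$ containing exactly one of $r, s$; say $r \in F$ and $s \notin F$. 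Apply Lemma~\ref{morphism with negation} to obtain a morphism $\varphi \colon L \to A(W)$ where $W = \{F_1,\ldots,F_n\}$, defined by $\alpha^{\operatorname{tuple}}(\bar F) \in \varphi(t) \iff \alpha(t) \in F$.

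To verify this $\varphi$ separates $r$ from $s$, take $\alpha = \operatorname{id}$: by axiom (4), $\operatorname{id}(r) = r \in F$ while $\operatorname{id}(s) = s \notin F$, so $\bar F = \operatorname{id}^{\operatorname{tuple}}(\bar F) \in \varphi(r) \setminus \varphi(s)$, as required. Taking the product of these morphisms over all separating pairs $(r,s)$ yields an injective morphism $L \hookrightarrow \prod A(W_{r,s})$, and since each $A(W_{r,s})$ is a quantifier-free algebra, $L$ embeds into a product of quantifier-free algebras. Thus $L \in SP(\text{quantifier-free algebras})$, which completes the proof of the axiomatization; since the class $SP(K)$ is precisely the class of models of the Horn clause theory of $K$, we have simultaneously axiomatized this Horn clause theory.

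There is no genuine obstacle here beyond having Lemma~\ref{morphism with negation} available; the only thing to be careful about is checking that the two new axioms (5) and (6) really do hand us what we need inside Lemma~\ref{morphism with negation}, namely that $\varphi$ commutes with $\neg$. That verification is already recorded in the proof of Lemma~\ref{morphism with negation}: axiom (6) forces $\neg t \in F \iff t \notin F$ in every prime filter $F$, and axiom (5) lets us pull substitutions through negation, so $\alpha(\neg t) \in F \iff \neg \alpha(t) \in F \iff \alpha(t) \notin F$, which is exactly the statement that $\varphi$ preserves negation. With that in hand, the Proposition~\ref{Horn clause} argument carries over verbatim.
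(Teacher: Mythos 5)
Your proposal is correct and is essentially the paper's own proof, which simply says the result follows from Lemma~\ref{morphism with negation} exactly as Proposition~\ref{Horn clause} follows from Lemma~\ref{morphism}; you have just unfolded that reduction explicitly (separating any two distinct elements by a prime filter, applying the lemma, and using axiom (4) with $\alpha=\operatorname{id}$ to see the separation survives, then taking a product). No gaps.
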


\begin{proof}
 This follows from Lemma~\ref{morphism with negation} in the same way that Proposition~\ref{Horn clause} follows from Lemma~\ref{morphism}.
\end{proof}

\noindent\textit{Proof of Theorem~\ref{negation theorem}}.
The proof is the same as that for Theorem~\ref{main theorem}, except that the theory $T$ used in that proof changes in a very minor way: we must add preservation of negation to the morphic conditions. In detail, we add the following sentences to $T$:
\begin{enumerate}[(A)]
\setcounter{enumi}{1}
\item 
\begin{enumerate}[(i)]
\setcounter{enumii}{5}
\item $\forall\bar{x}\,\,\,(\neg r)(\bar{x})\iff \neg(r(\bar{x}))$
\end{enumerate}
\end{enumerate}
The addition of this to the theory $T$ does not change the rest of the argument because Lemma~\ref{morphism with negation} handles negation. Note that Lemma~\ref{sum} remains unchanged by an expansion of the signature.

\noindent\textit{End of Proof of Theorem~\ref{negation theorem}}

\subsection{Projection}

Adding projection takes more work than adding negation. As in the quantifier-free case, our argument below works whether negation is present or not, and so we will obtain the following two theorems. (The new axioms (7)-(10) are presented later in this section.)

\begin{theorem}\label{main theorem positive existential}
Axioms (0)-(4) and (7)-(10) axiomatize the subalgebras of the positive existential algebras. 
\end{theorem}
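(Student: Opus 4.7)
The plan is to follow the blueprint of the proof of Theorem~\ref{main theorem}, adapting the parts of the argument that involve the projection operator $\exists$. Given an algebra $L$ satisfying axioms (0)--(4) and (7)--(10), I would introduce the same first order theory $T$ as before, but augment the morphic conditions (B) with a schema
\[\forall \bar{x}\, \bigl[(\exists r)(\bar{x}) \leftrightarrow \exists y\, r(\bar{x}, y)\bigr]\]
for each $r \in L$ of each sort $n+1$. Any model of the enlarged (B) is a morphism from $L$ into a positive existential algebra $A(W)$, and item (A) again forces this morphism to be injective on each sort, so the theorem reduces to showing that $T$ is finitely satisfiable.

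The new ingredient needed to manufacture models is a Henkin-style extension lemma: if $F$ is a prime filter on sort $n$ of $L$ and $r$ is an element of sort $n+1$ with $\exists r \in F$, then there is a prime filter $\tilde{F}$ on sort $n+1$ such that $r \in \tilde{F}$ and, for every $s$ of sort $n$, $s \in F$ iff $c(s) \in \tilde{F}$, where $c\colon n \to n+1$ is the cylindrification adjoining a fresh final coordinate. To prove it, I would consider the filter generated by $\{c(s) : s \in F\} \cup \{r\}$ and the ideal generated by $\{c(s) : s \notin F\}$, then show these are disjoint and apply the prime filter theorem. I anticipate that axioms (7)--(10) include standard identities such as $\exists c(s) = s$, $\exists(c(s) \wedge r) = s \wedge \exists r$, and $\exists$ distributing over $\vee$. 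Disjointness would then follow because any inequality $c(s) \wedge r \leq c(t)$ (reduced to this form using axioms (1)--(2), with $s \in F$ and $t \notin F$) yields upon applying $\exists$ the inequality $s \wedge \exists r \leq t$, contradicting primality of $F$ since $s, \exists r \in F$ but $t \notin F$.

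To verify finite satisfiability of $T$, given a finite $T_0 \subseteq T$ mentioning prime filters $F^1, \ldots, F^m$, I would first apply Lemma~\ref{sum} to assemble them into a master prime filter $G^{(0)}$ on sort $n_0 = k_1 + \cdots + k_m$. I would then iterate the Henkin extension countably many times to produce a coherent chain of prime filters $G^{(\ell)}$ on sorts $n_0 + \ell$, scheduling the iterations by a standard diagonal enumeration so that every existential obligation is eventually witnessed. Coherence along the cylindrifications lets me define a morphism $\varphi \colon L \to A(W)$ with $W = \{G_1, G_2, \ldots\}$ by $\alpha^{\operatorname{tuple}}(\bar{G}) \in \varphi(r) \iff \alpha(r) \in G^{(\ell)}$ for any sufficiently large $\ell$, and the scheduling ensures $\varphi$ preserves $\exists$ in addition to the operations already handled by the argument of Lemma~\ref{morphism}. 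This $\varphi$ satisfies (B) together with the finite portion of (A) involving $F^1, \ldots, F^m$, and compactness then finishes the proof.

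The main obstacle I foresee is the disjointness step in the Henkin extension lemma, which hinges on the precise content of axioms (7)--(10) and their interaction with the cylindrifications. A secondary obstacle is the bookkeeping for the iterative construction: since each extension introduces a new element $G_{n_0 + \ell + 1}$ to $W$, new tuples become available at later stages, each potentially creating fresh existential obligations, so some care is required to guarantee that every obligation is eventually addressed in the $\omega$-step limit.
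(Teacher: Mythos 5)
Your skeleton is reasonable and your Henkin-style extension lemma is precisely the paper's Lemma~\ref{project}, with essentially the same disjointness argument (a small caution: $\exists(c(s))=s$ is \emph{not} an axiom and is not even true in all positive existential algebras --- it fails over the empty set --- but your argument only uses $\exists(c(t))\leq t$, which does follow from axiom (9), so this is harmless). The genuine gap is in the witnessing stage. Each application of your extension lemma discharges exactly one obligation: one pair consisting of a tuple $\alpha^{\operatorname{tuple}}(\bar G)$ and one element $r$ with $\alpha(\exists(r))$ in the current filter (and note that even this transport along $\alpha$ already needs the $n=1$ instance of axiom (10), namely $\exists(\beta(r))=\alpha(\exists(r))$). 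An $\omega$-chain of prime filters on sorts $n_0, n_0+1,\ldots$ therefore discharges only countably many obligations and produces a countable $W$. But you claim the limit $\varphi$ ``satisfies (B)'', i.e.\ is a full morphism $L\to A(W)$ preserving $\exists$ for \emph{every} $r\in L$. When $L$ is uncountable this is impossible in general: take $L$ to be the subalgebra of the positive existential algebra on an uncountable set generated by all singletons; it satisfies the axioms, and any $\exists$-preserving morphism must send the uncountably many pairwise disjoint nonzero sort-$1$ elements to pairwise disjoint nonempty subsets of $W$, forcing $W$ to be uncountable. So ``every existential obligation is eventually witnessed'' is not, as you suggest, a bookkeeping matter --- it is false for the construction as described.

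There are two ways to close the gap. (a) Observe that a finite fragment $T_0\subseteq T$ contains only finitely many of your new biconditionals, so you need only witness obligations for the finitely many $r$ mentioned in $T_0$; since each finite stage has finitely many tuples, the total number of obligations over the $\omega$-chain is then genuinely countable and your dovetailing succeeds, the limit being an almost morphism that additionally preserves $\exists$ for those finitely many $r$ --- which is all that compactness requires. (b) Follow the paper, which deliberately avoids the single-witness chain: it first extracts a $1$--$1$ \emph{almost morphism} (Lemma~\ref{1-1 almost morphism}, where the $\exists$-condition in the theory is weakened to the one-directional implication), and then repairs it by an $\omega$-chain of models $M_1\subseteq M_2\subseteq\cdots$ in which each $M_{k+1}$ adds \emph{all} witnesses over $M_k$ at once via a second, nested compactness argument (Lemmas~\ref{witnesses} and \ref{existential}); the finite-satisfiability step there builds a single prime filter on sort $m+n$ realizing $n$ witness obligations simultaneously, and that is where the full strength of axiom (10) for arbitrary $n$ is used. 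Either repair works, but as written your proposal overclaims at the limit step.
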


\begin{theorem}\label{main theorem first order}
Axioms (0)-(10) axiomatize the subalgebras of the first order algebras.
\end{theorem}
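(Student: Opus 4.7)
The approach mirrors the proofs of Theorems~\ref{main theorem} and \ref{negation theorem}: define a first-order theory $T$ whose models correspond to injective morphisms from $L$ into first order algebras, and then verify consistency of $T$ via compactness. The theory $T$ incorporates two new morphic condition schemas in addition to those from Theorem~\ref{negation theorem}, namely
\[
\forall\bar{x}\,(\exists r)(\bar{x}) \iff \exists y\,r(\bar{x},y)
\]
for each $r$, and $\forall\bar{x}\,\Delta_{i,j}^n(\bar{x}) \iff x_i = x_j$ for each $i,j,n$. A model of $T$ together with the item (A) sentences then yields an embedding $L \hookrightarrow A(W)$ into a first order algebra on the model's universe.

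The first key step is a Henkin-style extension lemma for prime filters: given any prime filter $F$ on sort $n$ of $L$ and any element $t$ of sort $n+1$ with $\exists^n t \in F$, there exists a prime filter $F'$ on sort $n+1$ such that $c(s) \in F' \iff s \in F$ (where $c\colon n \to n+1$ is the standard cylindrification) and $t \in F'$. Modelled on Lemma~\ref{sum}, consider $F_* = \{z : z \geq c(s)\wedge t \text{ for some } s \in F\}$ and $I_* = \{z : z \leq c(s') \text{ for some } s' \notin F\}$; their disjointness follows from a Frobenius-type axiom $\exists^n(c(s)\wedge t) = s \wedge \exists^n t$ combined with a cylindric identity $\exists^n(c(s')) = s'$, both expected to be found among axioms (7)--(10). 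Indeed, $c(s)\wedge t \leq c(s')$ would give $s \wedge \exists^n t \leq s'$ upon applying $\exists^n$, contradicting $s, \exists^n t \in F$ and $s' \notin F$. The Boolean prime ideal theorem then supplies $F'$.

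The second key step is finite satisfiability of $T$. For a finite fragment $T_0 \subseteq T$ mentioning prime filters $F^1,\ldots,F^m$ and elements $r_1,\ldots,r_p$ of $L$, first combine the $F^i$ into a master prime filter $G_0$ via Lemma~\ref{sum}, then apply the Henkin extension iteratively using a fair $\omega$-length bookkeeping enumerating pairs $(r_j,\alpha)$ for which $\alpha(\exists r_j)$ lies in the current prime filter but lacks a witness. The resulting $\omega$-chain $G_0 \subseteq G_1 \subseteq \cdots$ defines a countably infinite first-order structure $M$ whose universe is the union of the witness symbols introduced across stages, with each relation symbol $r$ of sort $k$ interpreted by $\alpha^{\operatorname{tuple}}(\bar{w}) \in r^M \iff \alpha(r) \in G_t$ for any sufficiently large $t$. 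The morphic conditions for $0,1,\vee,\wedge,\neg$, and substitutions hold as in the earlier proofs; the condition for $\exists$ holds by fairness together with the easy converse inequality $(\alpha,j)(r) \leq \alpha(\exists r)$ (obtained by composing the cylindric axiom $c(\exists r) \geq r$ with $(\alpha,j)$); equality is handled by passing to the quotient by $w_l \sim w_{l'} \iff \Delta_{l,l'}$ lies in the cumulative filter, which is well-defined and preserved by all operations via the equality axioms among (7)--(10). Compactness then delivers a model of all of $T$, yielding the embedding.

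The principal obstacle is establishing the Henkin extension lemma with the right Frobenius interaction, and setting up the $\omega$-iteration with correct bookkeeping so that every existential appearing in the limit universe is eventually witnessed. A secondary subtlety is the quotient by the equality equivalence relation, which must be verified to be preserved by substitutions, $\exists$, and the Boolean operations, using the equality axioms.
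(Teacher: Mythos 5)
Your proposal is correct in outline, but it reorganizes the paper's argument in a genuinely different way in the witness-handling phase. The paper splits the work into two independent stages: first it uses compactness together with Lemma~\ref{sum} to produce a single $1$-$1$ \emph{almost} morphism realizing all prime filters at once (Lemma~\ref{1-1 almost morphism}, where the $\exists$-condition is only required in the easy direction), and then it repairs this almost morphism by an $\omega$-chain of \emph{models}, each step of which adds finitely many witnesses simultaneously via a prime filter $H$ on sort $m+n$ satisfying a biconditional over the old tuple and weak realization of $n$ whole prime filters $G_i$ (Lemmas~\ref{witnesses} and \ref{existential}); this simultaneous step is where the general case of axiom (10) is used. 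You instead fold the Henkin construction into the finite-satisfiability step itself: an $\omega$-chain of \emph{prime filters} $G_0\subseteq G_1\subseteq\cdots$ on sorts of growing arity, adding one witness per step via the extension lemma (the paper's Lemma~\ref{project}), and reading the finite-fragment model off the limit. This works, and it has the mild advantage of needing only the $n=1$ instances of axiom (10): to witness $\alpha(\exists r)\in G_t$ for a subtuple $\alpha^{\operatorname{tuple}}(\bar w)$ you apply the extension lemma to $t=\beta(r)$ where $\beta^{\operatorname{tuple}}(\bar x y)=\alpha^{\operatorname{tuple}}(\bar x)y$, using $\exists(\beta(r))=\alpha(\exists r)$ to see the hypothesis is met; you gesture at this reduction but should spell it out, since the lemma as you state it only covers the full tuple. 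The biconditional coherence $c(s)\in G_{t+1}\iff s\in G_t$ is what makes the limit well defined and preserves the negative diagram (hence injectivity), so it is essential that your extension lemma delivers the ``iff'' and not just one inclusion; your construction does deliver it.

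Two inaccuracies to fix. First, $\exists(c(s'))=s'$ is not an axiom and is false in general (it fails in the algebra of relations on the empty set, as the paper notes); only $\exists(c(s'))\leq s'$ holds, which is fortunately the direction your disjointness argument uses. Second, Theorem~\ref{main theorem first order} as stated assumes only axioms (0)--(10), so it concerns the equality-free reduct: the constants $\Delta_{i,j}^n$ and their axioms (11)--(13) are a separate, additional module, and they are certainly not ``among (7)--(10).'' Your equality clause should either be dropped or be accompanied by axioms (11)--(13), in which case the compatibility of the quotient with $\exists$ and with the Henkin steps is an additional verification the paper itself only sketches. Finally, your fair bookkeeping is sound because a finite fragment of $T$ mentions only finitely many elements $r_j$, so the set of demands stays countable; but you should not claim the limit model satisfies the morphic conditions for \emph{all} of $L$ when $L$ is uncountable --- compactness only requires the finitely many instances in the fragment.
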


The proofs of these two theorems are essentially the same, and so for convenience we will focus attention on the positive existential case, i.e.\ Theorem~\ref{main theorem positive existential}.
The signature under consideration thus includes the substitutions, the lattice operations, and the projections, but not negation. Our general approach is to find a 1-1 function from the abstract algebra satisfying the axioms to a concrete positive existential algebra which is not quite a morphism because there are not enough witnesses, but then we modify the function to obtain an actual embedding by adding witnesses. 

Often when dealing with a projection $\exists\colon n+1\to n$ we wish to also speak of the \textbf{associated cylindrification} $c\colon n\to n+1$ given by $c(i)=i$, i.e.\ $c(\bar{x}y)=\bar{x}$. The operations $\exists$ and $c$ form a Galois connection, which is a special case of how direct image and inverse image form a Galois connection.  However, we present the situation equationally with the following axioms, which imply more than just this Galois connection.

\begin{center}
\textbf{Axioms for Projection}
\end{center}
\begin{enumerate}[(1)]
\setcounter{enumi}{6}
\item $\exists$ preserves 0 and $\vee$
\item For each projection $\exists\colon (n+1)\to n$ and associated cylindrification $c\colon n\to (n+1)$ we have the axiom: For all $r\colon (n+1)$ we have \begin{center}
$r\leq c(\exists(r))$
\end{center}
\item For each projection $\exists\colon (n+1)\to n$ and associated cylindrification $c\colon n\to (n+1)$ we have the axiom: For all $r\colon (n+1)$ and all $s\colon n$ we have \begin{center}
$\exists(r\wedge c(s))=\exists(r)\wedge s$. 
\end{center}
\item Let $\alpha_i\colon k_i\to m$ be substitutions for $i=1,\ldots, n$. Let $\beta_i\colon (k_i+1)\to (m+n)$ be the substitutions defined by $\beta_i^{\text{tuple}}(\bar{x}y_1\cdots y_n)=\alpha_i^{\text{tuple}}(\bar{x})y_i$. Then we have the axiom: For all $r_1\colon k_1+1$, $\ldots$, and all $r_n\colon k_n+1$ we have \[\exists^{(n)}(\bigwedge_{i=1}^n\beta_i(r_i))=\bigwedge_{i=1}^n\alpha_i(\exists(r_i))\]
where $\exists^{(n)}$ means we apply projection $n$ times.
\end{enumerate}

\begin{remark}
Here are some notes on the axioms for projection, and intuitive explanations.
\begin{itemize}
\item It is straightforward to check that these axioms are all true in the positive existential algebras. For illustration, consider axiom (10). Intuitively, this axiom says that casting an ensemble for a theatrical production involving $n$ roles is equivalent to finding a good actor for each role, as long as you do not care about how the team works together. A tuple $\bar{x}$ is in $\exists^{(n)}(\bigwedge_{i=1}^n\beta_i(r_i))$ if and only if there are $y_1,\ldots,y_n$ such that for each $i=1,\ldots,n$ we have $\alpha_i^{\text{tuple}}(\bar{x})y_i=\beta_i^{\text{tuple}}(\bar{x}\bar{y})\in r_i$. However, since each $y_i$ occurs on its own, this is equivalent to saying that for each $i=1,\ldots,n$ there is some $y_i$ with $\alpha_i^{\text{tuple}}(\bar{x})y_i\in r_i$, which is to say $\bar{x}$ is in $\bigwedge_{i=1}^n\alpha_i(\exists(r_i))$. 

\item Note that $r\leq s$ implies $\exists(r)\leq\exists(s)$ follows from axiom (7). Of course the substitutions are also increasing in this way because of axiom (2). 

\item It might appear that, by taking  $r=1$ in axiom (9), we could conclude that $\exists(c(s)) = s$.  This is usually correct, but not always. If we consider the algebra of relations on the empty set, and $s=1$ in sort zero, then $c(s)=1=0$ in sort one, and $\exists(c(s))=0\neq 1=s$. On the other hand, we do always have $\exists(c(s))\leq s$.   
\end{itemize}
\end{remark}

\begin{definition}
Let $L$ be an algebra in the positive existential signature, and let $A(W)$ be the positive existential algebra of relations on some set $W$. An \textbf{almost morphism} is a function (on each sort) $\varphi\colon L\to A(W)$ such that
\begin{enumerate}
\item $\varphi$ is morphic for the substitutions and the lattice operations
\item $\exists(\varphi(r))\subseteq\varphi(\exists(r))$
\end{enumerate}
I.e., an almost morphism is a morphism except for the possibility it might not satisfy $\exists(\varphi(r))\supseteq \varphi(\exists(r))$.
\end{definition} 

The modified version of Lemma~\ref{morphism} is as follows.

\begin{lemma}
\label{almost morphism}
Let $F$ be a prime filter in sort $n$ of some algebra $L$ satisfying the axioms (1), (2), (3), and (8). Let $F_1,\ldots,F_n$ be distinct symbols. Let $W=\{F_1,\ldots,F_n\}$. Let $A(W)$ denote the positive existential algebra of relations on the set $W$. Define a function (on each sort) $\varphi\colon L\to A(W)$ by putting
\[\alpha^{\operatorname{tuple}}(\bar{F})\in\varphi(r)\iff \alpha(r)\in F\]
Then $\varphi$ is an almost morphism.
\end{lemma}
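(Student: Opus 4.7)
The plan is to recycle the proof of Lemma~\ref{morphism} for the lattice operations and substitutions, and then separately verify the one-sided inequality $\exists(\varphi(r))\subseteq\varphi(\exists(r))$ using axiom (8) together with axiom (3).

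First I would observe that the verification that $\varphi$ is morphic for $0,1,\vee,\wedge$ and for substitutions uses only axioms (1), (2), and (3), and is word-for-word the same as the corresponding calculation in Lemma~\ref{morphism}. So the only genuinely new content is the containment $\exists(\varphi(r))\subseteq \varphi(\exists(r))$ for $r$ of sort $k+1$.

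Unpacking that containment concretely: suppose $\bar{x}\in \exists(\varphi(r))$, and write $\bar{x}=\alpha^{\operatorname{tuple}}(\bar F)$ for a (unique) substitution $\alpha\colon k\to n$. By definition of projection on $A(W)$, there is some $F_i\in W$ with $(\alpha^{\operatorname{tuple}}(\bar F),F_i)\in \varphi(r)$. Define $\beta\colon k+1\to n$ by $\beta(j)=\alpha(j)$ for $j\le k$ and $\beta(k+1)=i$, so that $\beta^{\operatorname{tuple}}(\bar F)=(\alpha^{\operatorname{tuple}}(\bar F),F_i)$. Then by the definition of $\varphi$, this membership is equivalent to $\beta(r)\in F$.

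Now I would apply axiom (8) to $r$: letting $c\colon k\to k+1$ be the cylindrification associated to $\exists\colon (k+1)\to k$, we have $r\leq c(\exists r)$ in $L$. Since substitutions preserve $\wedge$ by axiom (2), they are monotone with respect to $\leq$, so $\beta(r)\leq \beta(c(\exists r))$. By axiom (3), $\beta(c(\exists r))=(\beta\circ c)(\exists r)$, and a direct computation shows $\beta\circ c=\alpha$ (since $c$ is the identity inclusion on the first $k$ coordinates and $\beta$ restricted to those coordinates is $\alpha$). Therefore $\beta(r)\leq \alpha(\exists r)$. Because $\beta(r)\in F$ and $F$ is upward closed, we obtain $\alpha(\exists r)\in F$, which is exactly $\alpha^{\operatorname{tuple}}(\bar F)\in \varphi(\exists r)$, as desired.

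The only mildly subtle step is the identification $\beta\circ c=\alpha$; once that is noted, axioms (3) and (8) slot in immediately and there is no genuine obstacle. In particular, axiom (9) and the cylindrification inequality $\exists(c(s))\leq s$ are not needed here, which is consistent with the fact that the reverse inclusion $\varphi(\exists r)\subseteq\exists(\varphi(r))$ really can fail for lack of witnesses in the finite set $W$ — this is precisely what forces the later proof of Theorem~\ref{main theorem positive existential} to add witnesses rather than work directly with this $\varphi$.
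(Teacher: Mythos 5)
Your proof is correct and follows essentially the same route as the paper's: the lattice/substitution clauses are inherited verbatim from Lemma~\ref{morphism}, and the containment $\exists(\varphi(r))\subseteq\varphi(\exists(r))$ is obtained by writing the witness tuple as $\beta^{\operatorname{tuple}}(\bar F)$ with $\beta\circ c=\alpha$ and then chaining axiom (8), monotonicity of $\beta$ from axiom (2), and axiom (3) to get $\alpha(\exists r)=(\beta\circ c)(\exists r)=\beta(c(\exists r))\geq\beta(r)\in F$. Your explicit identification of $\beta\circ c=\alpha$ is just a slightly more detailed rendering of the same step in the paper.
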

\begin{proof}
The new thing we need to verify is that $\exists(\varphi(r))\subseteq\varphi(\exists(r))$. Let $\alpha^{\text{tuple}}(\bar{F})\in\exists(\varphi(r))$. There is a substitution $\beta$ such that $c^{\text{tuple}}(\beta^{\text{tuple}}(\bar{F}))=\alpha^{\text{tuple}}(\bar{F})$ and $\beta^{\text{tuple}}(\bar{F})\in\varphi(r)$. Thus, $\beta(r)\in F$. We want to check that $c^{\text{tuple}}(\beta^{\text{tuple}}(\bar{F}))\in\varphi(\exists(r))$, i.e.\ $(\beta\circ c)(\exists(r))\in F$. Well,
\begin{align*}
(\beta\circ c)(\exists(r))&=\beta(c(\exists(r)))\\
&\geq\beta(r)\\
&\in F
\end{align*}
\end{proof}

Lemma~\ref{almost morphism} does not immediately yield an axiomatization of the Horn clause theory, but rather the following lemma.

\begin{lemma}
\label{Horn separation}
Let $L$ be an algebra satisfying axioms (1)-(4) and (8). Let $r\neq s$ be two distinct elements in the same sort. Then there is an almost morphism $\varphi$ from $L$ to a positive existential algebra such that $\varphi(r)\neq\varphi(s)$.
\end{lemma}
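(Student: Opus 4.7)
The plan is to mimic the proof of Proposition~\ref{Horn clause}: use the Boolean prime ideal theorem to produce a prime filter separating $r$ from $s$, and then feed this filter into Lemma~\ref{almost morphism} instead of Lemma~\ref{morphism}. Let $n$ be the common sort of $r$ and $s$. By axiom (1), sort $n$ of $L$ is a distributive lattice, and since $r \neq s$ at least one of $r \not\leq s$ or $s \not\leq r$ holds; I may assume the former. Then the principal filter $\{z\colon z\geq r\}$ is disjoint from the principal ideal $\{z\colon z\leq s\}$ (any $z$ in the intersection would force $r\leq s$), so by the Boolean prime ideal theorem there is a prime filter $F$ on sort $n$ with $r\in F$ and $s\notin F$.

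Next I invoke Lemma~\ref{almost morphism}, which applies because $L$ is assumed to satisfy axioms (1), (2), (3), and (8). This gives an almost morphism $\varphi\colon L\to A(W)$ for $W=\{F_1,\dots,F_n\}$, defined by $\alpha^{\operatorname{tuple}}(\bar F)\in\varphi(t)\iff \alpha(t)\in F$. Specialising to $\alpha=\operatorname{id}$ and using axiom (4), I obtain
\[
\bar F = \operatorname{id}^{\operatorname{tuple}}(\bar F) \in \varphi(r)
\qquad\text{and}\qquad
\bar F \notin \varphi(s),
\]
because $\operatorname{id}(r)=r\in F$ and $\operatorname{id}(s)=s\notin F$. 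Hence $\varphi(r)\neq\varphi(s)$, proving the lemma.

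I do not expect a serious obstacle here: the lemma is essentially a repackaging of Lemma~\ref{almost morphism} as a separation statement, in exact analogy with how Proposition~\ref{Horn clause} is extracted from Lemma~\ref{morphism}. The only subtlety worth flagging is why the result is stated in this separation form rather than as an axiomatization of the Horn clause theory of positive existential algebras: almost morphisms do not automatically compose into a morphism under products (the defect $\exists(\varphi(r))\subsetneq \varphi(\exists(r))$ can survive passage to a product), so one really gets pointwise separation by almost morphisms rather than an embedding into a product, which is why this step has to be stated as its own lemma and deferred to a later argument that adds witnesses to upgrade the almost morphism into a genuine morphism.
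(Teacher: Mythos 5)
Your proof is correct and matches the paper's intended argument: the paper simply says the lemma ``follows from Lemma~\ref{almost morphism} in the same way that (a portion of) Proposition~\ref{Horn clause} follows from Lemma~\ref{morphism},'' and you have filled in exactly that argument (prime filter separating $r$ from $s$, then Lemma~\ref{almost morphism} plus axiom (4) to see $\bar F\in\varphi(r)\setminus\varphi(s)$).
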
 

\begin{proof}
Follows from Lemma~\ref{almost morphism} in the same way that (a portion of) Proposition~\ref{Horn clause} follows from Lemma~\ref{morphism}.
\end{proof}

Similarly, the argument in the proof of Theorem~\ref{main theorem} applied to this situation does not immediately yield Theorem~\ref{main theorem positive existential}, but rather the following lemma.

\begin{lemma}
\label{1-1 almost morphism}
Let $L$ be an algebra satisfying axioms (0)-(4), and (8). Then there is a 1-1 almost morphism from $L$ to some positive existential algebra.
\end{lemma}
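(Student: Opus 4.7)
The plan is to follow the template of the proof of Theorem~\ref{main theorem}, substituting Lemma~\ref{almost morphism} for Lemma~\ref{morphism} and weakening the morphic condition for $\exists$ accordingly.

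First I would set up a first order relational signature and theory $T$ exactly as in the proof of Theorem~\ref{main theorem}: a relation symbol $r$ of arity $n$ for each element $r\in L$ of sort $n$, constants $F_1,\ldots,F_n$ for each prime filter $F$ on sort $n$, axioms (A) asserting $r(\bar F)$ when $r\in F$ and $\neg r(\bar F)$ when $r\notin F$, and morphic conditions (B) for the substitutions and the lattice operations $0,1,\vee,\wedge$. To (B) I would add the one-sided schema, for each element $r$ of sort $n+1$,
\[\forall \bar x\,\bigl[\,\exists y\, r(\bar x, y)\;\rightarrow\;(\exists r)(\bar x)\,\bigr].\]
A model of $T$ then unpacks to a function $\varphi\colon L\to A(W)$, where $W$ is the underlying set of the model; (B) together with this schema makes $\varphi$ an almost morphism, while (A) forces it to be 1-1 on each sort (using axiom (1), which makes each sort a distributive lattice so that prime filters separate distinct elements).

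Second, I would show $T$ is finitely satisfiable and then invoke compactness, tracking the pattern of the proof of Theorem~\ref{main theorem}. Any finite fragment mentions only finitely many prime filters $F^1,\ldots,F^m$ with $F^i$ on sort $k_i$. Using axioms (0), (1), (2), I would apply Lemma~\ref{sum} to produce a master prime filter $G$ on sort $n=k_1+\cdots+k_m$ satisfying $c_i(r)\in G \iff r\in F^i$, where the $c_i$ are partitioning cylindrifications. Setting $W=\{G_1,\ldots,G_n\}$ and interpreting the constants for each $F^i$ as the appropriate coordinates of $c_i^{\text{tuple}}(\bar G)$, Lemma~\ref{almost morphism} (needing axioms (1), (2), (3), (8), all of which we have) yields an almost morphism $\varphi\colon L\to A(W)$ satisfying the required instances of (A), exactly as at the end of the proof of Theorem~\ref{main theorem}.

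Compactness will then supply a model of $T$, which unpacks to the desired 1-1 almost morphism. The substance has been front-loaded into Lemmas~\ref{sum} and~\ref{almost morphism}, so I do not anticipate any substantial new obstacle here; the sole structural difference from Theorem~\ref{main theorem} is that axiom (8) only gives the half-inclusion $\exists(\varphi(r))\subseteq\varphi(\exists(r))$, reflected by the one-sided implication in $T$. The real difficulty — adding enough witnesses to upgrade this 1-1 almost morphism to an honest embedding into a positive existential algebra — will presumably be the content of the subsequent (not yet excerpted) steps of the proof of Theorem~\ref{main theorem positive existential}.
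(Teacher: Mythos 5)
Your proposal is correct and follows essentially the same route as the paper: the paper's proof likewise reruns the compactness argument of Theorem~\ref{main theorem} verbatim, replacing the morphic conditions by the almost morphic conditions via the one-sided schema $\forall\bar{x},y\,\,(r(\bar{x}y)\implies(\exists(r))(\bar{x}))$, with Lemma~\ref{sum} supplying the master prime filter and Lemma~\ref{almost morphism} supplying the almost morphism on each finite fragment. Your closing remark is also accurate: the upgrade from a 1-1 almost morphism to an embedding is handled separately by Lemma~\ref{existential}.
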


\begin{proof}
The proof is the same as that for Theorem~\ref{main theorem}, except that the morphic conditions of the theory $T$ become the \textbf{almost morphic conditions}. That is, instead of adding $\forall\bar{x}\,\,\,(\neg r)(\bar{x})\iff \neg(r(\bar{x}))$ as we did in the proof of Theorem~\ref{negation theorem}, we add
\begin{enumerate}[(A)]
\setcounter{enumi}{1}
\item
\begin{enumerate}[(i)]
\setcounter{enumii}{5}
\item $\forall\bar{x},y\,\,\,(r(\bar{x}y)\implies(\exists(r))(\bar{x}))$
\end{enumerate}
\end{enumerate}
\end{proof}

To go further, we need a way of turning an almost morphism into an actual morphism. The following lemmas help us accomplish this.

\begin{lemma}
\label{project}
Let $L$ be an algebra that satisfies the axioms (1), (2), (7), and (9). Let $F$ be some prime filter on sort $n$ and let $r$ be an element in sort $n+1$ such that $\exists(r)\in F$. Then there is some prime filter $G$ on sort $n+1$ such that $r\in G$ and for all $u$ in sort $n$ we have $c(u)\in G$ if and only if $u\in F$.  
\end{lemma}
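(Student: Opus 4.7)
The plan is to mimic the structure of Lemma~\ref{sum}: construct a filter $G_F$ and an ideal $G_I$ of sort $n+1$, show they are disjoint (this is where axioms (7) and (9) come in), and then apply the Boolean prime ideal theorem to get a prime filter $G$ extending $G_F$ and avoiding $G_I$.

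Concretely, let $A$ denote sort $n+1$ as a distributive lattice (axiom (1)) and define
\[G_F := \{z \in A \mid z \geq r \wedge c(u) \text{ for some } u \in F\},\]
\[G_I := \{z \in A \mid z \leq c(v) \text{ for some } v \notin F\}.\]
Closure of $G_F$ under meets and of $G_I$ under joins reduces, via axiom (2), to closure of $F$ under meets and of the complement of $F$ under joins (the latter holding because $F$ is prime); upward/downward closure is built into the definitions. So $G_F$ is a filter and $G_I$ is an ideal.

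The core step is disjointness. Suppose $z \in G_F \cap G_I$, so $r \wedge c(u) \leq z \leq c(v)$ with $u \in F$ and $v \notin F$. Apply $\exists$. Axiom (7) says $\exists$ preserves $\vee$, which makes $\exists$ monotone (the standard derivation: if $a \leq b$ then $b = a \vee b$, so $\exists(b) = \exists(a) \vee \exists(b)$, hence $\exists(a) \leq \exists(b)$). Thus $\exists(r \wedge c(u)) \leq \exists(c(v))$. By axiom (9) with the element $u$, $\exists(r \wedge c(u)) = \exists(r) \wedge u$. Also, taking $r = 1$ in axiom (9) gives $\exists(c(v)) = \exists(1) \wedge v \leq v$ (as noted in the remarks following the projection axioms). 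Combining, $\exists(r) \wedge u \leq v$. Since $\exists(r) \in F$ and $u \in F$, the filter $F$ contains $\exists(r) \wedge u$, hence $v$, contradicting $v \notin F$. So $G_F$ and $G_I$ are disjoint.

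By the Boolean prime ideal theorem applied to the distributive lattice $A$, there is a prime filter $G \supseteq G_F$ with $G \cap G_I = \emptyset$. Taking $u = 1 \in F$ shows $r = r \wedge c(1) \in G_F \subseteq G$. For $u \in F$ we have $c(u) \geq r \wedge c(u) \in G_F$, so $c(u) \in G$. For $u \notin F$ we have $c(u) \in G_I$, so $c(u) \notin G$. This gives the desired $G$. The only genuinely delicate point is the disjointness calculation, which is where axioms (7) and (9) do their work; the rest is a straightforward filter/ideal bookkeeping analogous to Lemma~\ref{sum}.
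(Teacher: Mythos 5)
Your proof is correct and follows essentially the same route as the paper's: the same $G_F$ and $G_I$, the same use of monotonicity of $\exists$ from axiom (7) together with $\exists(c(v))\leq v$ and $\exists(r\wedge c(u))=\exists(r)\wedge u$ from axiom (9) to get disjointness, and the same prime filter separation at the end. No gaps.
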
 
\begin{proof}
We use the same approach as in the proof of Lemma~\ref{sum}. Let $A$ be the distributive lattice which is sort $n+1$ of $L$. Define
\[G_F:=\{z\in A\mid z\geq r\wedge c(u)\text{ for some }u\in F\}\]
and 
\[G_I:=\{z\in A\mid c(u)\geq z\text{ for some }u\not\in F\}\]
Then as in Lemma~\ref{sum} we have that $G_F$ is a filter, $G_I$ is an ideal, and they are disjoint. These things are easy to check, and we here only deal with disjointness for illustration. Suppose, to get a contradiction, that $c(u)\geq z\geq r\wedge c(t)$ where $t\in F$ and $u\not\in F$. Then by axioms (7) and (9) we get
\begin{align*}
u&\geq \exists(c(u))\\
&\geq\exists(r\wedge c(t))\\
&=\exists(r)\wedge t\\
&\in F
\end{align*}
putting $u\in F$, a contradiction.

So there is a prime filter $G$ extending $G_F$ and disjoint from $G_I$. This works.
\end{proof}   

Given a prime filter $G$ on sort $n+1$, note that $c^{-1}(G):=\{u\mid c(u)\in G\}$ is always a prime filter on sort $n$ such that $u\in c^{-1}(G)$ if and only if $c(u)\in G$. The above lemma asserts that given any prime filter $F$ on sort $n$ and element $r$ with $\exists(r)\in F$, there is some prime filter $G$ such that $r\in G$ and $c^{-1}(G)=F$. 

Recall that when $L$ is an algebra in the positive existential signature, an almost morphism from $L$ to a positive existential algebra is (essentially) the same thing as a model of the almost morphic conditions. Note that every tuple $(a_1,\ldots,a_n)$ from a model $M$ of the almost morphic conditions gives rise to a prime filter ${\mathfrak p}(\bar{a}):=\{r\mid M\models r(\bar{a})\}$ of $L$ on sort $n$. 

\begin{definition} If $M_1$ and $M_2$ are models of the almost morphic conditions (associated to some algebra $L$ in the positive existential signature), then we say that $M_2$ \textbf{has witnesses over} $M_1$ when $M_1$ is a substructure of $M_2$, written $M_1\subseteq M_2$, and whenever $(a_1,\ldots,a_n)$ is a tuple from $M_1$ and $G$ is a prime filter of $L$ on sort $n+1$ with $c^{-1}(G)={\mathfrak p}(\bar{a})$, then there is some element $b$ in $M_2$ such that $\bar{a}b$ weakly realizes $G$, i.e.\ $M_2\models r(\bar{a}b)$ for every $r\in G$.
Note that I say ``weakly realizes'' instead of ``realizes'' because we do not require that $M_2\models\neg r(\bar{a}b)$ when $r\not\in G$. 
\end{definition}

\begin{lemma}
\label{witnesses}
Let $L$ be an algebra that satisfies axioms (1)-(3), (7)-(10). Let $M_1$ be a model of the almost morphic conditions. Then there is some model $M_2\supseteq M_1$ of the almost morphic conditions which has witnesses over $M_1$. 
\end{lemma}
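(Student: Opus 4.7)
The plan is a compactness argument in the style of the proof of Theorem~\ref{main theorem}. I expand the language of the almost morphic conditions by adjoining a constant $\hat{a}$ for each $a\in M_1$ and a fresh constant $b_{\bar{a},G}$ for each pair $(\bar{a},G)$ with $\bar{a}$ a tuple from $M_1$ of some length $n$ and $G$ a prime filter of $L$ on sort $n+1$ satisfying $c^{-1}(G)=\mathfrak{p}(\bar{a})$. Let $T'$ consist of: the almost morphic conditions; the positive and negative atomic diagram of $M_1$ (so that any model has $M_1$ as a substructure via $a\mapsto\hat{a}$); and the witness schema $r(\hat{a}_{i_1},\dots,\hat{a}_{i_n},b_{\bar{a},G})$ for every relevant pair $(\bar{a},G)=((a_{i_1},\dots,a_{i_n}),G)$ and every $r\in G$. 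A model of $T'$ is the desired $M_2$, so it suffices to show $T'$ is finitely satisfiable.

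Consider a finite fragment of $T'$. Let $a_1,\dots,a_N$ enumerate the $M_1$-constants it mentions and $(\bar{a}^1,G^1),\dots,(\bar{a}^m,G^m)$ the witness pairs it mentions. Set $F:=\mathfrak{p}(a_1,\dots,a_N)$, a prime filter on sort $N$. Each $\bar{a}^i$ may be written as $\gamma_i^{\text{tuple}}(a_1,\dots,a_N)$ for a unique substitution $\gamma_i\colon n_i\to N$, and the hypothesis on $G^i$ becomes $c^{-1}(G^i)=\gamma_i^{-1}(F)$. I aim to find a prime filter $H$ on sort $N+m$ with $\pi_0^{-1}(H)=F$ (where $\pi_0\colon N\to N+m$ is the inclusion) and $G^i\subseteq\delta_i^{-1}(H)$, where $\delta_i\colon n_i+1\to N+m$ has $\delta_i(j)=\gamma_i(j)$ for $j\leq n_i$ and $\delta_i(n_i+1)=N+i$. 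Applying Lemma~\ref{almost morphism} to $H$ yields an almost morphism $\varphi\colon L\to A(W)$ with $W=\{h_1,\dots,h_{N+m}\}$; interpreting $\hat{a}_j:=h_j$ and $b_{\bar{a}^i,G^i}:=h_{N+i}$ then satisfies the finite fragment, $\pi_0^{-1}(H)=F$ securing the diagram of $M_1$ and $G^i\subseteq\delta_i^{-1}(H)$ securing the witness axioms.

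Existence of $H$ is the technical heart of the proof. Following Lemma~\ref{sum}, I define the filter
\[H_F:=\{z\mid z\geq\pi_0(r_0)\wedge\bigwedge\nolimits_{i=1}^m\delta_i(r_i)\text{ for some }r_0\in F,\ r_i\in G^i\}\]
and the ideal
\[H_I:=\{z\mid z\leq\pi_0(s_0)\text{ for some }s_0\notin F\},\]
the latter being an ideal because primeness of $F$ makes $F^c$ closed under joins in sort $N$. For disjointness, suppose $\pi_0(r_0)\wedge\bigwedge_i\delta_i(r_i)\leq\pi_0(s_0)$ with $r_0\in F$, $r_i\in G^i$, and $s_0\notin F$, and apply $\exists^{(m)}$ to both sides (monotone by axiom~(7)). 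Iterating axiom~(9) yields the identity $\exists^{(m)}(c^{(m)}(u)\wedge X)=u\wedge\exists^{(m)}(X)$ for $u$ on sort $N$ and $X$ on sort $N+m$; with $u=r_0$, $X=\bigwedge_i\delta_i(r_i)$, axiom~(10) (in the case $\alpha_i=\gamma_i$, so $\beta_i=\delta_i$) collapses $\exists^{(m)}(X)$ to $\bigwedge_i\gamma_i(\exists(r_i))$, while the same identity with $u=s_0$, $X=1$ bounds the right side by $s_0$. Thus $r_0\wedge\bigwedge_i\gamma_i(\exists(r_i))\leq s_0$ in sort $N$. Now axiom~(8) gives $r_i\leq c(\exists(r_i))$, so $c(\exists(r_i))\in G^i$ and hence $\exists(r_i)\in c^{-1}(G^i)=\gamma_i^{-1}(F)$, i.e.\ $\gamma_i(\exists(r_i))\in F$. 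The left side is therefore in $F$, forcing $s_0\in F$, a contradiction. The Boolean prime ideal theorem now extends $H_F$ to a prime filter $H$ disjoint from $H_I$, and the two required properties of $H$ follow at once from $H_F\subseteq H$ and $H\cap H_I=\emptyset$.

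The main obstacle is the projection calculation: axiom~(10) is what commutes $\exists^{(m)}$ with the conjunction of the $\delta_i$'s, and axiom~(9) is what pulls the cylindrification $\pi_0(r_0)$ through the projections; together they reduce the inequality from sort $N+m$ down to sort $N$, where primeness of $F$ finishes the contradiction. Everything else is bookkeeping in the spirit of Lemma~\ref{sum} and Theorem~\ref{main theorem}.
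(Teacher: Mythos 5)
Your proposal is correct and follows essentially the same route as the paper's proof: the same compactness setup with the literal diagram plus witness constants, the same reduction to producing a prime filter $H$ on the sum sort $N+m$ via a filter $H_F$ and ideal $H_I$, and the same disjointness computation using axioms (7)--(10) to push the inequality down to sort $N$ where primeness of $F$ gives the contradiction. The only differences are notational (your $\pi_0,\gamma_i,\delta_i$ are the paper's $\beta_0,\alpha_i,\beta_i$) and that you phrase the hypothesis as $c^{-1}(G^i)=\gamma_i^{-1}(F)$ rather than $={\mathfrak p}(\bar a_i)$, which is the same thing by the almost morphic conditions on $M_1$.
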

\begin{proof}
Consider the following first order theory $U$, the signature for which contains a relation symbol for each element of $L$ with arity corresponding to its sort, and also some constants as indicated below:
\begin{enumerate}[(A)]
\item The almost morphic conditions.
\item The literal diagram of $M_1$. I.e.\ for every tuple $\bar{a}$ from $M_1$ and every $r$ in $L$ we write $r(\bar{a})$ when $M_1\models r(\bar{a})$ and $\neg r(\bar{a})$ when $M_1\models\neg r(\bar{a})$ 
\item For each prime filter $G$ in sort $n+1$ of $L$, and each $n$-tuple $\bar{a}$ from $M_1$ with ${\mathfrak p}(\bar{a})=c^{-1}(G)$, we introduce a new constant $y_{G,\bar{a}}$, and then for each $r\in G$, we write
\[r(\bar{a}y_{G,\bar{a}})\]
\end{enumerate}
Items (A) and (B) of the theory ensure that a model satisfies the almost morphic conditions and is a superstructure of $M_1$. Item (C) ensures that a model will have witnesses over $M_1$. By the compactness theorem, we'll thus be done if we can find a model of any given finite amount of items (A), (B), and (C).

Let $U^-$ be consist of finitely many sentences from items (B) and (C).  Only finitely many elements of $M_1$ appear in $U^-$. Collect them together in one big tuple $\bar{a}$, say of length $m$, without duplicates. We will be satisfying all of item (A). Let $(G_1,\bar{a}_1),\ldots,(G_n,\bar{a}_n)$ be the tuple/prime filter pairs that occur in $U^-$ and item (C). We may assume $n\geq 1$ (otherwise we can let $M_2=M_1$). Finitely many of the elements $r\in G_i$ will occur, but we will actually be ensuring things work for all $r\in G_i$, for each $i$. We have $c^{-1}(G_i)={\mathfrak p}(\bar{a}_i)$ where $\alpha_i^{\text{tuple}}(\bar{a})=\bar{a}_i$ for some substitution $\alpha_i$. Let $k_i$ denote the length of $\bar{a}_i$. Define substitutions $\beta_0^{\text{tuple}}(\bar{a}y_1\cdots y_n)=\bar{a}$ and $\beta_i^{\text{tuple}}(\bar{a}\bar{y})=\bar{a}_iy_i$ for $1\leq i\leq n$.

Now I claim that there is a prime filter $H$ on sort $m+n$ such that 
\begin{enumerate}[(I)]
\item For all $r$ in sort $m$ we have $\beta_0(r)\in H$ if and only if $r\in {\mathfrak p}(\bar{a})$, and
\item For each $i=1,\ldots,n$, for each $r$ in sort $k_i+1$ we have $r\in G_i$ implies $\beta_i(r)\in H$.
\end{enumerate} 
Suppose for now that there is such a prime filter. Then we define a model $M_2^-$ with underlying set $\{H_1,\ldots,H_{m+n}\}$ as follows:
\[M_2^-\models r(\gamma(\bar{H}))\iff \gamma(r)\in H\]
We interpret the constants $a_1,\ldots,a_m$ by $H_1,\ldots,H_m$, and the constants $y_{G_1,\bar{a}_1},\ldots,y_{G_n,\bar{a}_n}$ by $H_{m+1},\ldots,H_{m+n}$. 
Of course item (A) is satisfied by Lemma~\ref{almost morphism}. 
Now consider the sentences in $U^-$ and item (C). Let $r\in G_i$. We want $M_2^-\models r(\bar{a}_iy_{G_i,\bar{a}_i})$, i.e.\ $\beta_i(r)\in H$ (because $\beta_i^{\text{tuple}}(\bar{a}\bar{y})=\bar{a}_iy_i$). But this is implied by $r\in G_i$, according to (II).

Finally consider the sentences in $U^-$ and item (B). We show that for any tuple $\gamma^{\text{tuple}}(\bar{a})$ assembled from $\bar{a}$, and for any $r$ of the appropriate sort, we have $M_2^-\models r(\gamma^{\text{tuple}}(\bar{a}))$ if and only if $M_1\models r(\gamma^{\text{tuple}}(\bar{a}))$.
Note that \[\gamma^{\text{tuple}}(\bar{a})=\gamma^{\text{tuple}}(\beta_0(\bar{a}\bar{y}))=(\beta_0\circ\gamma)^{\text{tuple}}(\bar{a}\bar{y})\]
and so by the definition of $M_2^-$, we have $M_2^-\models r(\gamma^{\text{tuple}}(\bar{a}))$ if and only if $\beta_0(\gamma(r))=(\beta_0\circ\gamma)(r)\in H$. By (I), this is equivalent to $\gamma(r)\in {\mathfrak p}(\bar{a})$, i.e.\ $M_1\models(\gamma(r))(\bar{a})$. Since $M_1$ itself satisfies the almost morphic conditions, this is equivalent to $M_1\models r(\gamma^{\text{tuple}}(\bar{a}))$, as desired.
 
Now we show that we can get such an $H$. We use an argument similar to that of Lemma~\ref{sum} or Lemma~\ref{project}. Let $A$ be the distributive lattice which is sort $m+n$ of $L$. Define
\[H_F:=\{z\in A\mid z\geq \beta_0(r)\wedge\bigwedge_{i=1}^n\beta_i(r_i)\text{ for some }r\in {\mathfrak p}(\bar{a})\text{ and }r_i\in G_i\}\]
and 
\[H_I:=\{z\in A\mid \beta_0(r)\geq z\text{ for some }r\not\in {\mathfrak p}(\bar{a})\}\]
Then $H_F$ is a filter, $H_I$ is an ideal, and they are disjoint. The main thing to check is the disjointness. Suppose, to get a contradiction, that 
\[\beta_0(s)\geq z\geq\beta_0(r)\wedge\bigwedge_{i=1}^n\beta_i(r_i)\]
where $s\not\in {\mathfrak p}(\bar{a})$, $r\in {\mathfrak p}(\bar{a})$, and $r_i\in G_i$ for each $i$. Observe that $c^{(n)}=\beta_0$, and so by repeated use of the facts that $\exists(c(t))\leq t$ and $\exists$ is increasing, we get
\[s\geq \exists^{(n)}(\beta_0(s))\geq\exists^{(n)}(\beta_0(r)\wedge\bigwedge_{i=1}^n\beta_i(r_i))\]
By repeated use of axiom (9), the right hand side becomes
\[r\wedge\exists^{(n)}(\bigwedge_{i=1}^n\beta_i(r_i))\]
Putting this all together with axiom (10), we see that
\[s\geq r\wedge\bigwedge_{i=1}^n\alpha_i(\exists(r_i))\]
To get that $s\in {\mathfrak p}(\bar{a})$, a contradiction, we will show that $\alpha_i(\exists(r_i))\in {\mathfrak p}(\bar{a})$ for each $i$. By axiom (8), $c(\exists(r_i))\geq r_i\in G_i$, so $\exists(r_i)\in {\mathfrak p}(\bar{a}_i)$ (recall that $c^{-1}(G_i)={\mathfrak p}(\bar{a}_i)$ by assumption). Then, as $M_1$ satisfies the almost morphic conditions, and $\alpha_i^{\text{tuple}}(\bar{a})=\bar{a}_i$, we get that $\alpha_i(\exists(r_i))\in {\mathfrak p}(\bar{a})$.

A prime filter $H$ which extends $H_F$ and is disjoint from $H_I$ is as desired. 
\end{proof}

If $f\colon A\to B$ is a function, we use $\operatorname{ker}(f)$ to denote the relation $\{(a,a')\in A^2\mid f(a)=f(a')\}$. 

\begin{lemma}
\label{existential}
Let $L$ be an algebra that satisfies axioms (1)-(3) and (7)-(10). Let $\varphi$ be an almost morphism from $L$ to some positive existential algebra. Then there is a morphism $\varphi^+$ from $L$ to some positive existential algebra such that $\ker(\varphi^+)\subseteq\ker(\varphi)$. In particular, if $\varphi$ is 1-1, then $\varphi^+$ is an embedding.  
\end{lemma}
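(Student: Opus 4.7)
The plan is to iterate Lemma~\ref{witnesses} along a countable chain and pass to the union, so that all needed witnesses are added and the almost morphism becomes an honest morphism.

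First I would translate $\varphi$ into model-theoretic language: the almost morphism $\varphi\colon L\to A(W)$ is the same data as a structure $M_1$ with underlying set $W$ interpreting each relation symbol $r$ of $L$ as $\varphi(r)$; by definition of almost morphism, $M_1$ satisfies the almost morphic conditions. An almost morphism $\psi$ (coming from a model $M$) is an honest morphism exactly when $\exists(\psi(r))\supseteq\psi(\exists(r))$ (the other inclusion is axiom~(8)), i.e.\ whenever $\bar{a}\in M$ satisfies $\exists(r)\in {\mathfrak p}(\bar{a})$, some $b\in M$ satisfies $r\in {\mathfrak p}(\bar{a}b)$.

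Next I would build, by repeated use of Lemma~\ref{witnesses}, a chain $M_1\subseteq M_2\subseteq M_3\subseteq\cdots$ of models of the almost morphic conditions, where each $M_{i+1}$ has witnesses over $M_i$, and set $M_\omega:=\bigcup_{i<\omega}M_i$ with associated almost morphism $\varphi^+$. Two observations make this work cleanly: (a) each almost morphic condition is a $\Pi_1$ sentence (universally quantified biconditionals of atomic formulas, plus the universal Horn clause $\forall\bar{x}y\,(r(\bar{x}y)\Rightarrow(\exists r)(\bar{x}))$), hence preserved under unions of chains, so $M_\omega$ still models the almost morphic conditions; and (b) since each $M_i\hookrightarrow M_\omega$ is a substructure inclusion, it preserves atomic formulas, so for any tuple $\bar{a}\in M_i$ the prime filter ${\mathfrak p}(\bar{a})$ is the same whether computed in $M_i$ or in $M_\omega$.

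I would then verify $\varphi^+$ is a morphism as follows. Given $\bar{a}\in M_\omega$ and $r$ with $\exists(r)\in {\mathfrak p}^{M_\omega}(\bar{a})$, choose $i$ with $\bar{a}\in M_i$, so by (b) also $\exists(r)\in {\mathfrak p}^{M_i}(\bar{a})$. Apply Lemma~\ref{project} to the prime filter ${\mathfrak p}^{M_i}(\bar{a})$ to obtain a prime filter $G$ on sort $n+1$ with $r\in G$ and $c^{-1}(G)={\mathfrak p}^{M_i}(\bar{a})$. The witnesses property of $M_{i+1}$ over $M_i$ then produces $b\in M_{i+1}\subseteq M_\omega$ with $M_\omega\models r(\bar{a}b)$, as required. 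Finally, the kernel condition is automatic: since $M_1\subseteq M_\omega$ preserves atomic formulas, $\varphi^+(r)\cap W^n=\varphi(r)$ for every $r$, so $\varphi^+(r)=\varphi^+(s)$ forces $\varphi(r)=\varphi(s)$; when $\varphi$ is $1$-$1$ the same holds for $\varphi^+$, making it an embedding.

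The only real obstacle is observation (a) together with compatibility of types at the limit, i.e.\ ensuring that the iterated construction does not create new tuples whose needed witnesses were never provided. This is why the whole almost morphic setup was framed with a $\Pi_1$ existence clause rather than an equational one: once (a) and (b) are in hand, the iteration is routine and Lemma~\ref{witnesses} together with Lemma~\ref{project} do all the real work.
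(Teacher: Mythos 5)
Your proposal is correct and follows essentially the same route as the paper's proof: iterate Lemma~\ref{witnesses} along an $\omega$-chain, pass to the union (noting the almost morphic conditions are universal, hence preserved), and use Lemma~\ref{project} plus the witness property to supply the missing inclusion $\varphi^+(\exists(r))\subseteq\exists(\varphi^+(r))$, with the kernel condition following because $M_1$ sits inside the union as a substructure. The extra care you take with points (a) and (b) is exactly the content the paper treats more briefly.
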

\begin{proof}
The almost morphism $\varphi$ gives rise to a model $M_1$ which satisfies the almost morphic conditions. By Lemma~\ref{witnesses} there is a model $M_2\supseteq M_1$ of the almost morphic conditions which has witnesses over $M_1$. Continuing in this way, we get a sequence
\[M_1\subseteq M_2\subseteq M_3\subseteq\cdots\]
of length $\omega$ where $M_{n+1}$ has witnesses over $M_n$. Let $M^+$ be the union of this chain of models. Since the almost morphic conditions are of a form preserved by unions of chains, we get that $M^+$ models them too. 

Further, if $M^+\models (\exists(r))(\bar{a})$, then $M_n\models(\exists(r))(\bar{a})$ for some $n$. By Lemma~\ref{project}, there is a prime filter $G$ such that ${\mathfrak p}(\bar{a})=c^{-1}(G)$ and $r\in G$. Since $M_{n+1}$ has witnesses over $M_n$, there is some element $b\in M_{n+1}$ such that $M_{n+1}\models r(\bar{a}b)$. Thus, $M^+\models r(\bar{a}b)$. In summary, $M^+\models(\exists(r))(\bar{a})$ implies $M^+\models \exists y(r(\bar{a}y))$. Thus, the function given by $\varphi^+(r):=r^{M^+}$ is a morphism from $L$ to the positive existential algebra of relations on the underlying set of $M^+$.

Finally, suppose $\varphi^+(r)=\varphi^+(s)$. We show that $\varphi(r)=\varphi(s)$. If there were $\bar{a}$ from $M_1$ with $\bar{a}\in\varphi(r)-\varphi(s)$, then $\bar{a}\in\varphi^+(r)-\varphi^+(s)$ as well, because $M_1\subseteq M^+$.
\end{proof}

From Lemma~\ref{Horn separation} and Lemma~\ref{existential} we get the following proposition:

\begin{proposition}[B\"orner]
\label{Horn for positive existential}
Axioms (1)-(4) and (7)-(10) equationally axiomatize the subalgebras of products of the positive existential algebras.
\end{proposition}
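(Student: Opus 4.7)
The plan is to follow the same template as Proposition~\ref{Horn clause}, separating points with morphisms and then taking a product, but with the intermediate step of upgrading almost morphisms to genuine morphisms.

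First, for soundness I would observe that axioms (1)--(4) and (7)--(10) are all equations (axiom (8), stated as $r \leq c(\exists(r))$, unpacks to $r = r \wedge c(\exists(r))$; the rest are already written as equations). They hold in every positive existential algebra (as remarked after each axiom group), and since the class of models of any set of equations is closed under products and subalgebras, $SP(\text{pos.\ ex.\ algebras})$ satisfies (1)--(4) and (7)--(10).

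For the converse, let $L$ be any algebra satisfying axioms (1)--(4) and (7)--(10). I want to produce an embedding of $L$ into a product of positive existential algebras. For each pair $(r,s)$ of distinct elements of $L$ lying in the same sort, Lemma~\ref{Horn separation} (which requires exactly (1)--(4) and (8)) furnishes an almost morphism $\varphi_{r,s}\colon L \to A(W_{r,s})$ with $\varphi_{r,s}(r) \neq \varphi_{r,s}(s)$. Then Lemma~\ref{existential} (which requires exactly (1)--(3) and (7)--(10)) upgrades $\varphi_{r,s}$ to an honest morphism $\varphi_{r,s}^{+}\colon L \to A(W_{r,s}^{+})$ satisfying $\ker(\varphi_{r,s}^{+}) \subseteq \ker(\varphi_{r,s})$. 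Since $(r,s) \notin \ker(\varphi_{r,s})$, it follows that $(r,s) \notin \ker(\varphi_{r,s}^{+})$, so $\varphi_{r,s}^{+}(r) \neq \varphi_{r,s}^{+}(s)$ — the upgrade preserves whatever separation the almost morphism provided.

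Finally, form the product morphism
\[
\Phi \,:=\, \prod_{\substack{r \neq s \\ \text{same sort}}} \varphi_{r,s}^{+} \colon L \longrightarrow \prod_{r \neq s} A(W_{r,s}^{+}).
\]
Because each factor is a morphism and products of morphisms are morphisms in the multisorted functional signature, $\Phi$ is a morphism into a product of positive existential algebras. Because each projection $\varphi_{r,s}^{+}$ distinguishes its designated pair, $\Phi$ itself is 1-1 on every sort, hence an embedding. This exhibits $L$ as a subalgebra of a product of positive existential algebras, completing the proof. There is no real obstacle here — the proposition is essentially a packaging of Lemma~\ref{Horn separation} and Lemma~\ref{existential}, the only check being that the hypotheses of both lemmas are jointly subsumed by axioms (1)--(4) and (7)--(10), which they are.
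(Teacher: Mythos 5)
Your proposal is correct and matches the paper's intended argument exactly: the paper derives this proposition directly from Lemma~\ref{Horn separation} and Lemma~\ref{existential} without writing out the details, and your write-up is precisely that packaging (separate each pair by an almost morphism, upgrade via $\ker(\varphi^+)\subseteq\ker(\varphi)$, take the product). The soundness direction and the check that the axiom lists of the two lemmas are subsumed by (1)--(4) and (7)--(10) are also handled correctly.
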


Theorem~\ref{main theorem positive existential} follows immediately from Lemma~\ref{1-1 almost morphism} and Lemma~\ref{existential}.

\subsection{Equality}\label{equality}

If we wish to add equality, we may do so (modularly) with the following axioms. In this section we do not provide a detailed analysis, but rather just indicate briefly how the above argument changes. 
\begin{center}
\textbf{Axioms for Equality}
\end{center}
\begin{enumerate}[(1)]
\setcounter{enumi}{10}
\item
\begin{enumerate}
\item $\Delta_{i,i}^n=1$
\item $\Delta_{i,j}^n=\Delta_{j,i}^n$
\item $\Delta_{i,j}^n\wedge\Delta_{j,k}^n\leq\Delta_{i,k}^n$
\end{enumerate}
\item When $\alpha,\beta\colon k\to n$ are substitutions of matching arities we have the axiom:
\[\alpha(r)\wedge\bigwedge_{l=1}^k \Delta_{\alpha(l),\beta(l)}^n=\beta(r)\wedge\bigwedge_{l=1}^k\Delta_{\alpha(l),\beta(l)}^n\]
\item For each substitution $\alpha\colon k\to n$ we have the axiom:
\[\alpha(\Delta_{i,j}^k)=\Delta_{\alpha(i),\alpha(j)}^n\]
\end{enumerate}

It is straightforward to check that these equational axioms are all true in the concrete algebras where $\Delta_{i,j}^n$ is interpreted as the $n$-ary relation which holds of an $n$-tuple if and only if the $i^{th}$ and $j^{th}$ coordinates are equal. Axiom (11) corresponds to the usual properties of an equivalence relation. Axiom (12) is algebraically saying the obvious fact that 
\begin{align*}
\{\bar{x}\mid \alpha^{\text{tuple}}(\bar{x})\in r&\text{ and }\alpha^{\text{tuple}}(\bar{x})=\beta^{\text{tuple}}(\bar{x})\}\\
&=\{\bar{x}\mid \beta^{\text{tuple}}(\bar{x})\in r\text{ and }\alpha^{\text{tuple}}(\bar{x})=\beta^{\text{tuple}}(\bar{x})\}
\end{align*}
Finally, to make sense of axiom (13), recall that the $i^{th}$ coordinate of $\alpha^{\text{tuple}}(\bar{x})$ is $x_{\alpha(i)}$. So, $\alpha^{\text{tuple}}(\bar{x})\in\Delta_{i,j}^k$ if and only if $x_{\alpha(i)}=x_{\alpha(j)}$. 

Let $F$ be a prime filter on sort $n$. Our basic strategy is the same --- get a modified version of Lemma~\ref{morphism} by having $F$ correspond to a tuple $(F_1,\ldots,F_n)$ --- except that now we may have to identify certain of the $F_i$. By axiom (11), we may put
\[F_i=F_j\iff\Delta_{i,j}^n\in F\]
That is, the relation $\{(i,j)\mid \Delta_{i,j}\in F\}$ is an equivalence relation. In detail, it is reflexive by axiom (11) part (a). It is symmetric by axiom (11) part (b). And it is transitive by axiom (11) part (c).
But then we may have $\alpha^{\text{tuple}}(\bar{F})=\beta^{\text{tuple}}(\bar{F})$ for distinct substitutions $\alpha$ and $\beta$. 
The upshot of axiom (12) is that this won't matter: 
If $\alpha^{\text{tuple}}(\bar{F})=\beta^{\text{tuple}}(\bar{F})$ then $\alpha(r)\in F\iff\beta(r)\in F$. To see this, observe that
\begin{align*}
\alpha^{\text{tuple}}(\bar{F})=\beta^{\text{tuple}}(\bar{F})&\iff F_{\alpha(l)}=F_{\beta(l)}\text{ for each }l=1,\ldots,k\\
&\iff \bigwedge_{l=1}^k\Delta_{\alpha(l),\beta(l)}^n\in F
\end{align*}
So if $\alpha(r)\in F$ and $\alpha^{\text{tuple}}(\bar{F})=\beta^{\text{tuple}}(\bar{F})$, then we get
\begin{align*}
\beta(r)&\geq\beta(r)\wedge\bigwedge_{l=1}^k \Delta_{\alpha(l),\beta(l)}^n\\
&=\alpha(r)\wedge\bigwedge_{l=1}^k\Delta_{\alpha(l),\beta(l)}^n\\
&\in F
\end{align*}
and so $\beta(r)\in F$.

Now we are in a position to obtain the with-equality version of Lemma~\ref{morphism}, using axiom (13) for the preservation of the $\Delta_{i,j}^n$. For definiteness, we state the lemma for positive quantifier-free algebras with equality.

\begin{lemma}
\label{morphism with equality}
Let $L$ be an algebra that satisfies axioms (1)-(3), (11)-(13). Let $F$ be a prime filter on sort $n$. Let $F_1,\ldots,F_n$ be symbols such that $F_i=F_j$ if and only if $\Delta_{i,j}^n\in F$. Let $W=\{F_1,\ldots,F_n\}$. Let $A(W)$ be the positive quantifier-free algebra with equality on the relations of $W$. Then $\varphi\colon L\to A(W)$ defined by \[\alpha^{\operatorname{tuple}}(\bar{F})\in\varphi(r)\iff\alpha(r)\in F\]
is a morphism.
\end{lemma}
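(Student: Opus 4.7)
The plan is to follow the proof of Lemma~\ref{morphism} essentially verbatim, with two adjustments forced by the presence of equality: first, $\varphi$ must be checked to be well-defined as a function on tuples (since distinct substitutions $\alpha,\beta\colon k\to n$ may yield the same tuple $\alpha^{\operatorname{tuple}}(\bar F)=\beta^{\operatorname{tuple}}(\bar F)$ once the identifications $F_i=F_j$ are in force), and second, $\varphi$ must be shown to preserve the new constants $\Delta_{i,j}^k$.

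First I would handle well-definedness. Every $k$-tuple from $W$ can be expressed as $\alpha^{\operatorname{tuple}}(\bar F)$ for some substitution $\alpha\colon k\to n$, so the prescription really assigns a subset of $W^k$ to each element $r$ of sort $k$ --- provided the assignment does not depend on the choice of $\alpha$. Suppose $\alpha^{\operatorname{tuple}}(\bar F)=\beta^{\operatorname{tuple}}(\bar F)$. Then $F_{\alpha(l)}=F_{\beta(l)}$ for each $l$, which by the chosen identifications means $\Delta_{\alpha(l),\beta(l)}^n\in F$ for each $l$, whence $\bigwedge_{l=1}^k\Delta_{\alpha(l),\beta(l)}^n\in F$. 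Axiom (12) then yields $\alpha(r)\in F\iff\beta(r)\in F$. This is precisely the computation already performed in the paragraph preceding the lemma.

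Next, preservation of $0,1,\vee,\wedge$ and the substitutions goes through unchanged from the proof of Lemma~\ref{morphism}, using axioms (2) and (3). Those verifications never relied on a bijective correspondence between substitutions and tuples, only on the fact that every tuple from $W^k$ arises from at least one $\alpha$, which remains true here.

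Finally, for the equality constants, I would apply axiom (13) to chain the equivalences
\begin{align*}
\alpha^{\operatorname{tuple}}(\bar F)\in\varphi(\Delta_{i,j}^k)
&\iff\alpha(\Delta_{i,j}^k)\in F\\
&\iff\Delta_{\alpha(i),\alpha(j)}^n\in F\\
&\iff F_{\alpha(i)}=F_{\alpha(j)},
\end{align*}
and observe that the last condition says exactly that the $i$-th and $j$-th coordinates of $\alpha^{\operatorname{tuple}}(\bar F)$ agree, i.e.\ $\alpha^{\operatorname{tuple}}(\bar F)$ lies in the interpretation of $\Delta_{i,j}^k$ in $A(W)$. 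The only real obstacle is the well-definedness check, and since the discussion preceding the lemma already set this up, the proof is essentially a packaging of that discussion together with the verifications of Lemma~\ref{morphism} plus the one-line application of axiom (13).
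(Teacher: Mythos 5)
Your proposal is correct and follows the paper's own proof essentially verbatim: the paper likewise disposes of well-definedness by appealing to axiom (12) (via the discussion preceding the lemma), treats the lattice and substitution clauses as unchanged from Lemma~\ref{morphism}, and verifies preservation of $\Delta_{i,j}^k$ by exactly the chain of equivalences you give using axiom (13). No gaps.
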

\begin{proof}
As observed above, this definition of $\varphi$ is unambiguous by axiom (12).
 
The new thing we have to check is that \[\varphi(\Delta_{i,j}^k)=\{\alpha^{\text{tuple}}(\bar{F})\mid F_{\alpha(i)}=F_{\alpha(j)}\}\]
Well,
\begin{align*}
\varphi(\Delta_{i,j}^k)&=\{\alpha^{\text{tuple}}(\bar{F})\mid \alpha(\Delta_{i,j}^k)\in F\}\\
&=\{\alpha^{\text{tuple}}(\bar{F})\mid \Delta_{\alpha(i),\alpha(j)}^n\in F\}\\
&=\{\alpha^{\text{tuple}}(\bar{F})\mid F_{\alpha(i)}=F_{\alpha(j)}\}
\end{align*}
\end{proof}

\section{Theories}
\label{theories}

We now consider how formulas and theories may be understood in the context of our multisorted algebraic approach. As mentioned in the introduction, thinking about first order theories in an algebraic way is not new, and it was discussed even in our multisorted formalism by B\"orner in Section~3.7 of \cite{Boerner}. However, our discussion here will help explain the value of axiom (0) in letting us have a uniform argument for the various reducts.

Let us say a first order \textbf{formula} in some relational signature $\sigma$ is an element of the free algebra (in the first order algebra signature) generated by the symbols of $\sigma$ (which are relation symbols of various fixed finite arities). Let us use the notation $F_\sigma$ to refer to this free algebra. Positive existential formulas, quantifier-free formulas, etc. are defined correspondingly. For an example, let $\sigma$ consist of a unary relation symbol $R$ and a binary relation symbol $S$. Let $\alpha\colon 2\to 2$ be the substitution $\alpha^{\text{tuple}}(x,y)=(y,x)$. Then 
\[R,\,\, S,\,\, \alpha(S),\,\, \exists(\alpha(S)),\,\, R\wedge\exists(\alpha(S))\]
are some formulas.

This way of viewing formulas does away with bound/free variables and the associated ``alphabetic variants", but of course a formula up to logical equivalence may have more than one syntactic representation in this formalism as well (e.g.\ $\alpha(\alpha(S))$ and $S$ are logically equivalent). Also note that the variable context has now become an intrinsic part of the formula (its arity). 

A first order $\sigma$-\textbf{structure} is a morphism from $F_\sigma$ to some first order algebra $M$. This is the same as a function which assigns to every relation symbol of $\sigma$ a relation on the underlying set of $M$. Let us use $K$ to denote the class of concrete algebras for the kind of logic under consideration (i.e.\ $K$ could be the first order algebras, or the positive existential algebras, etc.). Then a $\sigma$-structure for whatever logic is under consideration is a morphism from $F_\sigma$ to an algebra $M\in K$. 

Given any collection $T$ of identities of formulas (i.e.\ pairs of formulas from the same sort), the statement that a structure $f\colon F_\sigma\to M$ is a \textbf{model} of $T$ means that $f(r)=f(s)$ for each pair $(r,s)\in T$. A (partial) \textbf{theory} $T$ is an ``implicationally closed" collection of identities in the sense that if every model $f$ of $T$ satisfies $f(r)=f(s)$, then also $(r,s)\in T$. Every theory is in particular a congruence relation on $F_\sigma$. Thus, we have an associated quotient $F_\sigma/T$, which could be called the theory too. A morphism $F_\sigma/T\to M\in K$ is the same thing as a model of $T$. The algebras that arise as quotients in this way (i.e., are of the form $F_\sigma/T$ for some signature $\sigma$ and some $\sigma$-theory $T$) are exactly the subalgebras of the products of the concrete algebras, i.e.\ $SP(K)$. 

We include the easy verification of this fact for illustrative purposes. First let $Q=F_\sigma/T$ be such a quotient. We now prove that $Q$ must satisfy the equational theory of $K$ (which is equivalent to the Horn clause theory for the $K$ of present interest). Let $\varphi(\bar{r})=\chi(\bar{r})$ be an equation true in all members of $K$. Let $\bar{r}$ be some tuple from $Q$. Then let $f\colon Q\to M\in K$ be any model of $T$. Of course we must have $\varphi(f\bar{r})=\chi(f\bar{r})$. Since $f$ is a morphism, this yields $f(\varphi(\bar{r}))=f(\chi(\bar{r}))$. This works for any model $f$, and so by the assumption that $T$ is implicationally closed, we get that $Q\models\varphi(\bar{r})=\chi(\bar{r})$ too. Since $Q$ satisfies the equational theory of $K$, by Proposition~\ref{Horn clause} or its analogue, we get that $Q\in SP(K)$.  

Conversely, let $Q\in SP(K)$. Specifically let $Q\subseteq\prod_{i\in I} M_i$ where the $M_i$ are in $K$. Introduce a signature $\sigma$ with a symbol for each element of $Q$. Then $F_\sigma/T=Q$ for some congruence $T$. We claim $T$ is a theory, i.e.\ is implicationally closed. Suppose $r,s\in Q$ with $f(r)=f(s)$ for all morphisms $f\colon Q\to M\in K$. Then in particular for the projections $\pi_i\colon Q\to M_i$ ($i\in I$) we have $\pi_i(r)=\pi_i(s)$. So $r=s$.  

We may thus say that theories are simply subalgebras of products of the concrete algebras in question (with specified generators). The usual notion of first order theory is an (implicationally closed) collection of sentences (identities of the form $\varphi=1$ in sort zero). In the first order case, where universal quantification and the biconditional are present, this agrees with the notion of theory described above, essentially because $r=s$ in a first order algebra if and only if $\forall^{(n)}(r\leftrightarrow s)=1$ (where $r$ and $s$ are in sort $n$). Intuitively speaking, in the first order signature, the zero sort controls all the sorts. For the reducts this is not true. To illustrate this point, and to help explain the value of axiom (0), we now show that first order theories with exactly two elements in sort zero are the ones in $S(K)$, but importantly that this characterization does not hold for the reducts.

When considered as a collection of sentences, a first order theory is said to be \textbf{complete} when every sentence or its negation (but not both) is in the theory. Translating this to the quotient view of theories, this says that there are exactly two elements in sort zero. Of course any subalgebra of a first order algebra is going to be a theory with exactly two elements of sort zero. But the converse is true as well, \textit{when negation and projection are present}. We check that axiom (0) follows from the Horn clause theory of first order algebras together with the assumption that there are exactly two elements of sort zero. 

First we observe that in any algebra satisfying the Horn clause theory of first order algebras, for any element $t$ of sort $k$ we have $t=0\iff \exists^{(k)}(t)=0$, because the two directions of this bi-implication are both Horn clauses true of first order algebras (let us say are ``true Horn clauses"). If additionally we have an algebra with exactly two elements in sort zero (0 and 1), then $t\neq 0$ if and only if $\exists^{(k)}(t)=1$. 

We now prove axiom (0) in contrapositive form. Let $s_i\not\geq r_i$ for each $i=1,\ldots,m$, where $r_i,s_i\colon k_i$, and let $c_i\colon k_i\to (k_1+\cdots+k_m)=n$ be partitioning cylindrifications. Since $t\wedge\neg u=0\implies u\geq t$ is a true Horn clause, we get that $r_i\wedge\neg s_i\neq 0$ for each $i$. Thus, $\exists^{(k_i)}(r_i\wedge\neg s_i)=1$. Another true Horn clause is 
\[\bigwedge_{i=1}^m \exists^{(k_i)}(t_i)=1\implies\exists^{(n)}\bigwedge_{i=1}^m c_i(t_i)=1\] Thus, we get in our case
\[\exists^{(n)}\bigwedge_{i=1}^m c_i(r_i\wedge\neg s_i)=1\]
So 
\[\bigwedge_{i=1}^m c_i(r_i\wedge\neg s_i)\neq 0\]
which simplifies to $\dd\bigwedge_{i=1}^m c_i(r_i)\wedge\neg\bigvee_{i=1}^m c_i(s_i)\neq 0$. So $\dd\bigwedge_{i=1}^m c_i(r_i)\not\leq\bigvee_{i=1}^m c_i(s_i)$. 

So, we could have presented an axiomatization of the universal theory of first order algebras by just taking the Horn clause theory and adding to it the axiom that there are exactly two elements in sort zero. However, this would not have yielded results uniformly for the reducts as well. There is a model of the Horn clause theory of positive existential algebras which has exactly two elements of sort zero, but fails to satisfy axiom (0). To see this, consider the (partial) first order theory (presently we will be taking a positive existential reduct) in a language with three unary relation symbols $R$, $A$, and $B$ generated by the following sentences:
\begin{enumerate}[(i)]
\item $\exists x(A(x)\wedge B(x))$
\item $\forall x(R(x)\iff A(x))\vee\forall x(R(x)\iff B(x))$
\end{enumerate}  
Let $Q$ be the associated subalgebra of a product of first order algebras. Consider the positive existential reduct of $Q$, and then consider the subalgebra generated by $R$, $A$, and $B$. Call it $Q_0$, and note that $Q_0$ is itself a subalgebra of a product of positive existential algebras. Note that $X\in Q_0$ if and only if there is some positive existential formula $\varphi$ such that $\varphi(R,A,B)=X$. Because we're dealing with unary relation symbols, and projections of conjunctions of some of $R,A,B$ are predictably 1, we in fact may assume that $\varphi$ is positive quantifier-free. Every element of sort zero in $Q_0$ is obtained by projecting an element of sort one. One can check the only possible values are 0 and 1 (and $0\neq 1$ because our theory has a model). On the other hand, letting $c_1^{\operatorname{tuple}}(x,y)=x$ and $c_2^{\operatorname{tuple}}(x,y)=y$, we have $c_1(A)\wedge c_2(B)\leq c_1(R)\vee c_2(R)$ (i.e.\ $A(x)\wedge B(y)\models R(x)\vee R(y)$), but $A\not\leq R$ and $B\not\leq R$, violating axiom (0). 

It is easy to give a theory in the quantifier-free signature which does not satisfy axiom (0) and still has exactly two elements in sort zero, because there are no functions going from the higher sorts to sort zero in this case. So any violation of axiom (0) not involving sort zero yields an example. For instance, consider the quantifier-free algebra $A(W)$ of relations on a set $W$ of one element. Then the product $L:=A(W)\times A(W)$ has a ``diamond" for each sort. Let us use 0, $a$, $b$, and 1 to denote the elements of $L$ in sort one. Let $c_1^{\operatorname{tuple}}(x,y)=x$ and $c_2^{\operatorname{tuple}}(x,y)=y$ be partitioning cylindrifications. Then $c_1(a)\wedge c_2(b)$ is the bottom element in sort two. Thus, $c_1(b)\vee c_2(0)\geq c_1(a)\wedge c_2(b)$. However, $b\not\geq a$ and $0\not\geq b$, violating axiom (0).

\section{Dealing with Function Symbols}\label{function symbols}

We briefly indicate how to deal with function symbols. Let $\pi$ be a fixed functional signature. We have terms $\alpha(x_1,\ldots,x_n)$ defined as usual (elements of the free $\pi(\bar{x})$-algebra where the $\bar{x}$ are extra constant symbols). From these we obtain ``term-tuples"
\[\alpha(\bar{x})=(\alpha_1(\bar{x}),\ldots,\alpha_k(\bar{x}))\]
where each $\alpha_i(\bar{x})$ is a term.

The term-tuples induce operations on tuples of a $\pi$-algebra $W$ in the obvious way. Given a tuple $\bar{x}\in W^n$, we get $\alpha(\bar{x})=(\alpha_1(\bar{x}),\ldots,\alpha_k(\bar{x}))\in W^k$. The inverse images of these are operations on relations going in the reverse direction $\alpha\colon {\mathcal P}(W^k)\to {\mathcal P}(W^n)$. The substitutions are obtained as a special case for any signature $\pi$, and when $\pi$ is the empty signature, they are the only term-tuples. The multisorted signature of interest to us now has an operation of arity $\alpha\colon k\to n$ for each such term-tuple, and the concrete algebras of interest are the ones that arise from considering the relations on a $\pi$-algebra.

With respect to axiomatization, if equality is not present, we need only change axioms (2), (3), and (5) by expanding their scope to include all term-tuples (not just substitutions). 

As for equality, let us have a constant $\Delta_{\alpha,\beta}$ of sort $n$ for each pair of term-tuples $\alpha,\beta\colon k\to n$. The intended interpretation is $\Delta_{\alpha,\beta}:=\{\bar{x}\mid \alpha(\bar{x})=\beta(\bar{x})\}$. In the special case $\alpha(\bar{x})=x_i$ and $\beta(\bar{x})=x_j$, we get $\Delta_{\alpha,\beta}=\Delta_{i,j}^n$. 

Then we rewrite axioms (11)-(13) as follows:
\renewcommand{\labelitemii}{$\cdot$}
\begin{itemize}
 \item \begin{itemize}
\item $\Delta_{\alpha,\alpha}=1$
\item $\Delta_{\alpha,\beta}=\Delta_{\beta,\alpha}$
\item $\Delta_{\alpha,\beta}\wedge\Delta_{\beta,\gamma}\leq\Delta_{\alpha,\gamma}$
\end{itemize}
\item $\alpha(r)\wedge\Delta_{\alpha,\beta}=\beta(r)\wedge\Delta_{\alpha,\beta}$
\item $\gamma(\Delta_{\alpha,\beta})=\Delta_{\gamma\circ\alpha,\gamma\circ\beta}$
\end{itemize}
To these we also add
\begin{itemize}
 \item $\Delta_{\alpha,\beta}=\bigwedge_{i=1}^k\Delta_{\alpha_i,\beta_i}$ where $\alpha=(\alpha_1,\ldots,\alpha_k)$ and $\beta=(\beta_1,\ldots,\beta_k)$
\item $\Delta_{\alpha,\beta}\leq\Delta_{\alpha\circ\gamma,\beta\circ\gamma}$
\end{itemize}

So how do the proofs get modified? The only essential change is with the analogues of Lemma~\ref{morphism}. We want to have a prime filter $F$ on sort $n$ of an abstract algebra satisfying the axioms give rise to a morphism to a concrete algebra. When equality isn't present, instead of letting $W=\{F_1,\ldots,F_n\}$, we let $W$ be the free $\pi$-algebra with $F_1,\ldots,F_n$ as generators. In the special case where we have no function symbols, i.e.\ $\pi$ is empty, we get back the old $W$. We define the morphism as before: $\alpha(\bar{F})\in\varphi(r)\iff \alpha(r)\in F$, except that now $\alpha$ may range over all the term-tuples, not just the substitutions.

When equality is present, we additionally identify certain elements of this free algebra by saying $\alpha(\bar{F})=\beta(\bar{F})$ if and only if $\Delta_{\alpha,\beta}\in F$. The additional axioms ensure that this makes sense and in fact gives a congruence relation.

Finally, the fixed functional signature $\pi$ can also be taken to be multisorted, with only minor modifications to our argument.

\section{The Axioms}

For ease of reference, here is a list of the main axioms considered:
\begin{enumerate}[(1)]
\setcounter{enumi}{-1}
\item  When $c_1,\ldots,c_m$ are partitioning cylindrifications we have the axiom: 
\begin{center} 
 If $\dd\bigvee_{i=1}^m c_i(s_i)\geq\bigwedge_{i=1}^m c_i(r_i)$, then $s_i\geq r_i$ for some $i=1,\ldots,m$.    
\end{center}
\item $0,1,\vee,\wedge$ form a (bounded) distributive lattice in each sort. 
\item Substitutions preserve $0,1,\vee,\wedge$
\item $(\beta\circ\alpha)(r)=\beta(\alpha(r))$
\item $\operatorname{id}(r)=r$ 
\item $\alpha(\neg r)=\neg\alpha(r)$
\item $r\vee\neg r=1$, $r\wedge\neg r=0$
\item $\exists$ preserves 0 and $\vee$
\item $r\leq c(\exists(r))$
\item $\exists(r\wedge c(s))=\exists(r)\wedge s$ 
\item Let $\alpha_i^{\text{tuple}}(\bar{x})$ be substitutions for $i=1,\ldots,n$. Define $\beta_i^{\text{tuple}}(\bar{x}y_1\cdots y_n)=\alpha_i^{\text{tuple}}(\bar{x})y_i$. Then we have the axiom \[\exists^{(n)}(\bigwedge_{i=1}^n\beta_i(r_i))=\bigwedge_{i=1}^n\alpha_i(\exists(r_i))\]
where $\exists^{(n)}$ means we apply projection $n$ times.
\item
\begin{enumerate}
\item $\Delta_{i,i}^n=1$
\item $\Delta_{i,j}^n=\Delta_{j,i}^n$
\item $\Delta_{i,j}^n\wedge\Delta_{j,k}^n\leq\Delta_{i,k}^n$
\end{enumerate}
\item When $\alpha,\beta\colon k\to n$ are substitutions of matching arities we have the axiom:
\[\alpha(r)\wedge\bigwedge_{l=1}^k \Delta_{\alpha(l),\beta(l)}^n=\beta(r)\wedge\bigwedge_{l=1}^k\Delta_{\alpha(l),\beta(l)}^n\]
\item For each substitution $\alpha\colon k\to n$ we have the axiom:
\[\alpha(\Delta_{i,j}^k)=\Delta_{\alpha(i),\alpha(j)}^n\]
\end{enumerate}

\end{document}